\date{\today}
\newcommand{\ka}{\mathfrak{k}}
\newcommand{\p}{\mathfrak{p}}
\newcommand{\g}{\mathfrak{g}}
\newcommand{\C}{{\ensuremath{\mathbb{C}}}}
\newcommand\Ad{\operatorname{Ad}}
\newcommand{\Kt}{\widetilde{K}}
\def\dim{\mathop{\hbox {dim}}\nolimits}
\def\Ad{\mathop{\hbox {Ad}}\nolimits}
\def\ad{\mathop{\hbox {ad}}\nolimits}
\def\im{\mathop{\hbox {Im}}\nolimits}
\def\ker{\mathop{\hbox{Ker}}\nolimits}
\def\Cas{\mathop{\hbox {Cas}}\nolimits}
\newcommand{\ugcp}{U(\frg)\otimes C(\frp)}
\newcommand{\pf}{\begin{proof}}
\newcommand{\epf}{\end{proof}}
\newcommand{\eq}{\begin{equation}}
\newcommand{\eeq}{\end{equation}}
\newcommand{\eqn}{\begin{equation*}}
\newcommand{\eeqn}{\end{equation*}}
\newcommand{\twedge}{\textstyle\bigwedge}
\newcommand{\fra}{\mathfrak{a}}
\newcommand{\frg}{\mathfrak{g}}
\newcommand{\frh}{\mathfrak{h}}
\newcommand{\frk}{\mathfrak{k}}
\newcommand{\frn}{\mathfrak{n}}
\newcommand{\frp}{\mathfrak{p}}
\newcommand{\frt}{\mathfrak{t}}
\newcommand{\frsl}{\mathfrak{sl}}
\newcommand{\frso}{\mathfrak{so}}
\newcommand{\bbC}{\mathbb{C}}
\newcommand{\bbZ}{\mathbb{Z}}
\newcommand{\caH}{\mathcal{H}}
\newcommand{\tr}{\operatorname{tr}}
\newcommand{\cspan }{\operatorname{span}}
\theoremstyle{plain}
\newtheorem{theorem}{Theorem}[section]
\newtheorem{cor}[theorem]{Corollary}
\newtheorem{prop}[theorem]{Proposition}
\newtheorem{lemma}[theorem]{Lemma}
\newtheorem{remark}[theorem]{Remark}
\date{\today}
\title{K-invariants in the algebra $U(\mathfrak{g}) \otimes C(\mathfrak{p})$ for the group $SU(2,1)$}
\author{Ana Prli\' c}\thanks{This work was partially supported by a grant from the Croatian Science Foundation.}
\subjclass[2010]{Primary 22E47; Secondary 22E46}    
\keywords{Lie group, Lie algebra, representation, special unipotent representation, Dirac operator, Dirac cohomology}
\address{Department of Mathematics, University of Zagreb,
Bijeni\v cka cesta 30, 10000 Zagreb, Croatia.}
\email{anaprlic@math.hr}
\begin{document}

\begin{abstract}
Let $\g = \ka \oplus \p$ be the Cartan decomposition of the complexified Lie algebra $\g=\mathfrak{sl}(3,\bbC)$ of the group $G=SU(2,1)$. Let 
$K=S(U(2)\times U(1))$; so $K$ is a maximal compact subgroup of $G$. Let $U(\frg)$ be the universal enveloping algebra of $\frg$, and let $C(\frp)$ be the Clifford algebra with respect to the trace form $B(X,Y)=\tr(XY)$ on $\frp$. 
We are going to prove that the algebra of $K$--invariants in $U(\g) \otimes C(\p)$ is generated by five explicitly given elements. This is useful for studying algebraic Dirac induction for $(\frg,K)$-modules. Along the way we will also recover the (well known) structure of the algebra $U(\frg)^K$.
\end{abstract}

\maketitle

\section{Introduction}
Let $G$ be a connected real reductive Lie group with the Cartan involution $\Theta$, such that $K = G^{\Theta}$ is a maximal compact subgroup of $G$. Let $\g = \ka \oplus \p$ be the corresponding Cartan decomposition of the complexified Lie algebra of $G$.

A well known theorem due to Harish-Chandra \cite{HC} asserts that an irreducibile $(\g, K)$--module is characterized by the action of $U(\g)^{K}$ on any non-trivial $K$--isotypic component. Here $U(\g)$ denotes the universal enveloping algebra of $\g$. A simplified algebraic proof of this result was given by Lepowsky-McCollum \cite{LMC}.

The following version of that theorem was proved in \cite{PR}. Let $X$ be a $(\g, K)$--module. Let $S$ be the spin module for the Clifford algebra $C(\frp)$ of $\p$ with respect to the trace form $B(X,Y)=\tr(XY)$. Let $\tilde{K}$ be the spin double cover of $K$. Then the action of $K$--invariants in $U(\g) \otimes C(\p)$ on any nontrivial $\tilde{K}$-isotypic component of $X \otimes S$ determines an irreducible $(U(\g) \otimes C(\p), \tilde{K})$--module $X \otimes S$ up to isomorphism. 

The modules of the form $X\otimes S$ are important in the setting of Dirac operator actions and Dirac cohomology.
Let $D\in U(\frg)\otimes C(\frp)$ be the Dirac operator, defined as follows (\cite{P1}; \cite{V}). Let $b_i$ be any basis of $\frp$ and let $d_i$ be the dual basis with respect to $B$. Then
\[
D=\sum_i b_i\otimes d_i.
\]
It is easy to see that $D$ is independent of the choice of $b_i$, and $K$-invariant for the action $\Ad\otimes\Ad$ of $K$ on $\ugcp$. One of its main properties is the following formula for $D^2$ due to Parthasarathy \cite{P1}:
\eqn
%\label{Dsquared}
D^2=-(\Cas_\frg\otimes 1+\|\rho_\frg\|^2)+(\Cas_{\frk_\Delta}+\|\rho_\frk\|^2).
\eeqn
Here $\Cas_\frg$ is the Casimir element of $U(\frg)$ and $\Cas_{\frk_\Delta}$ is the Casimir element of $U(\frk_\Delta)$,
where $\frk_\Delta$ is the diagonal copy of $\frk$ in $U(\frg)\otimes C(\frp)$, defined using the obvious embedding
$\frk\hookrightarrow U(\frg)$ and the usual map $\frk\to\frso(\frp)\to C(\frp)$. See \cite{HP2} for details.

If $X$ is a $(\frg,K)$-module, and if $S$ is a spin module for $C(\frp)$, then $D$ acts on $X\otimes S$. The Dirac cohomology of $X$ is the $\Kt$-module
\[
H_D(X)=\ker D / \im D\cap\ker D.
\]
If $X$ is admissible, then $H_D(X)$ is finite-dimensional. This follows from the above formula for $D^2$, which implies that $\ker D^2$ is finite-dimensional, and from the obvious fact that $H_D(X)$ is the cohomology of the differential $D\big|_{\ker D^2}$. 

If $X$ is unitary, then
\eq
\label{HDunit}
H_D(X)=\ker D=\ker D^2.
\eeq
This follows from the existence of a natural inner product on $X\otimes S$, such that $D$ is self-adjoint with respect to this inner product. This also implies Parthasarathy's Dirac inequality, $D^2\geq 0$ \cite{P2}. Written more explicitly using the formula for $D^2$, this becomes a powerful necessary condition for unitarity. The situation is similar for a finite-dimensional module $X$; (\ref{HDunit}) still holds, and $D^2\leq 0$.

The main result about Dirac cohomology is the following theorem. It was conjectured by Vogan
\cite{V}, and proved by Huang and Pand\v zi\'c \cite{HP1}. 

Let $\frh=\frt\oplus\fra$ be a fundamental Cartan
subalgebra of $\frg$. We view $\frt^*\subset\frh^*$ by extending
functionals on $\frt$ by 0 over $\fra$. Denote by $R_{\frg}$ (resp. $R_{\frk}$)
the set of $(\frg,\frh)$-roots (resp. $(\frk,\frt)$-roots). We fix compatible 
positive root systems $R^{+}_{\frg}$ and  $R^{+}_{\frk}$ for $R_\frg$ and $R_\frk$ respectively.
In particular, this determines the half-sums of positive roots $\rho_\frg$ and $\rho_\frk$. Write $W_{\frg}$ (resp. $W_{\frk}$) for the
Weyl group associated with $(\frg,\frh)$-roots
(resp. $(\frk,\frt)$-roots). 

\begin{theorem}
\label{HPmain}
Let $X$ be a $(\frg,K)$-module with infinitesimal character corresponding to $\Lambda\in\frh^*$ via the Harish-Chandra isomorphism. Assume
that $H_D(X)$ contains the irreducible $\Kt$-module $E_\gamma$ with highest weight $\gamma\in\frt^*$. 
Then $\Lambda$ is equal to $\gamma+\rho_\frk$ up to conjugation by the Weyl group $W_\frg$. In other words, the $\frk$-infinitesimal character of any $\Kt$-type contributing to $H_D(X)$ 
is $W_\frg$-conjugate to the $\frg$-infinitesimal character of $X$.
\end{theorem}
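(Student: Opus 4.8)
The plan is to derive Theorem~\ref{HPmain} from a single algebraic identity in $\ugcp$, following the strategy of Huang and Pand\v zi\'c. Precisely, I would show that for every $z$ in the center $Z(\frg)$ of $U(\frg)$ there is an element $\zeta(z)$ of the center $Z(\frk_\Delta)$ of $U(\frk_\Delta)$ and an odd, $K$-invariant $a\in\ugcp$ with
\[
z\otimes 1=\zeta(z)+Da+aD .
\]
Granting this, the theorem follows quickly. On the Dirac cohomology $H_D(X)$ the operator $Da+aD$ acts as zero: for $v\in\ker D$ one has $(Da+aD)v=D(av)\in\im D$, while $Da+aD$ commutes with $D$ (it equals $d(a)$ for the differential $d$ of the next paragraph, which is $D$-closed because $d^2=0$) and hence preserves $\ker D$, so $(Da+aD)v\in\im D\cap\ker D$ represents $0$. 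Consequently $z\otimes1$ and $\zeta(z)$ act by the same scalar on the $\Kt$-type $E_\gamma\subseteq H_D(X)$. The left-hand side acts by the scalar through which $z$ operates via the infinitesimal character $\Lambda$ of $X$, and the right-hand side by the scalar through which $\zeta(z)$ operates via the $\frk$-infinitesimal character $\gamma+\rho_\frk$ of $E_\gamma$.

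To produce the identity I would use the super-commutator $d=[D,\,\cdot\,]$, which makes sense because $D$ is odd for the $\bbZ/2$-grading of $\ugcp$ coming from $C(\frp)$. Since $D$ is $K$-invariant, $d$ preserves $(\ugcp)^K$, and Parthasarathy's formula shows $d^2=0$ there: $d^2$ is the commutator with $D^2$, and modulo the central term $\Cas_\frg\otimes1$ this is the commutator with $\Cas_{\frk_\Delta}$, which is central on $K$-invariants. The element $z\otimes1$ is a cocycle because $z$ commutes with $D$, so the desired identity is exactly the assertion that the class of $z\otimes1$ in $H\big((\ugcp)^K,d\big)$ lies in the image of $Z(\frk_\Delta)$. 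The prototype is the Casimir: from $d(D)=2D^2$ and the $D^2$-formula one gets $\Cas_\frg\otimes1=\big(\Cas_{\frk_\Delta}+\|\rho_\frk\|^2-\|\rho_\frg\|^2\big)-\tfrac12\,d(D)$, which is the identity for $z=\Cas_\frg$.

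The heart of the matter is thus to compute this cohomology and to show that the natural map $Z(\frk_\Delta)\to H\big((\ugcp)^K,d\big)$ is an isomorphism. I would pass to the associated graded algebra for the degree filtration of $U(\frg)$, where $\ugcp$ degenerates to $S(\frg)\otimes C(\frp)$ and $d$ to a Koszul-type differential $\bar d$. A Kostant-style separation of variables, writing $S(\frg)$ as $\frk$-harmonics times $S(\frk)$, should collapse the cohomology of $\bar d$ on the $K$-invariants onto $S(\frk)^\frk=\gr Z(\frk_\Delta)$; a filtration comparison then lifts this to the filtered level, giving both halves of the isomorphism and the element $a$. Finally I would identify $\zeta\colon Z(\frg)\to Z(\frk_\Delta)$ through the Harish-Chandra isomorphisms $Z(\frg)\cong S(\frh)^{W_\frg}$ for $(\frg,\frh)$ and $Z(\frk_\Delta)\cong S(\frt)^{W_\frk}$ for $(\frk,\frt)$, expecting $\zeta$ to correspond to restriction of polynomials along $\frt^*\hookrightarrow\frh^*$; then the forced identity $p(\Lambda)=p(\gamma+\rho_\frk)$ for all $W_\frg$-invariant $p$ gives $\Lambda\in W_\frg\cdot(\gamma+\rho_\frk)$.

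The main obstacle is the cohomology computation: proving that $\bar d$ has no cohomology beyond $S(\frk)^\frk$ and that the harmonic decomposition is genuinely compatible with the $K$-action and survives the passage from $\gr$ back to the filtered algebra. A secondary delicate point is verifying that $\zeta$ is an algebra homomorphism matching the two Harish-Chandra isomorphisms, which is precisely what converts the scalar coincidence into the stated $W_\frg$-conjugacy of infinitesimal characters.
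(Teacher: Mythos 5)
This theorem is not proved in the paper at all: it is stated as background and attributed to Vogan's conjecture and its proof by Huang and Pand\v{z}i\'c \cite{HP1}, so there is no in-paper proof to compare against; the relevant comparison is with the cited source. Your outline reproduces exactly the Huang--Pand\v{z}i\'c argument: the identity $z\otimes 1=\zeta(z)+Da+aD$ with $\zeta(z)\in Z(\frk_\Delta)$ and $a$ odd and $K$-invariant, the differential $d=[D,\,\cdot\,]$ on $(\ugcp)^K$ with $d^2=0$ via Parthasarathy's formula, and the identification of $\zeta$ under the two Harish-Chandra isomorphisms with restriction $S(\frh)^{W_\frg}\to S(\frt)^{W_\frk}$; moreover, your deduction of the theorem from the identity (the operator $Da+aD$ kills $H_D(X)$ because it preserves $\ker D$ and maps it into $\im D$, whence $p(\Lambda)=p(\gamma+\rho_\frk)$ for all $W_\frg$-invariant $p$, and invariant polynomials separate $W_\frg$-orbits) is the standard one and is correct even for non-unitary $X$. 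The one imprecision worth flagging is in the part you defer: the graded computation in \cite{HP1} is not a Kostant harmonics decomposition of $S(\frg)$ but a Koszul-complex argument --- write $S(\frg)=S(\frk)\otimes S(\frp)$, observe that the symbol of $d$ is the contraction differential on $S(\frp)\otimes\twedge(\frp)$, which is acyclic except in degree zero where it leaves $S(\frk)$, then take $K$-invariants and lift through the filtration --- and carrying this out, together with pinning down $\zeta$, is where essentially all the work of the cited proof lies.
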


By now, Dirac cohomology has been calculated for many (unitary) modules, see \cite{HKP}, \cite{HPP}, \cite{BP1}, \cite{BP2}. It has been related to other kinds of cohomology of $(\frg,K)$-modules, like $\frn$-cohomology \cite{HPR} and $(\frg,K)$-cohomology \cite{HP1}, \cite{HKP}. It has also been related to some classical topics in representation theory, like (generalized) Weyl character formula and Bott-Borel-Weil Theorem, the construction of discrete series, and multiplicities of automorphic forms \cite{HP2}. It can also be successfully applied to some classical branching problems  \cite{HPZ}.  
The definition and some of the results, notably Theorem \ref{HPmain}, have been extended to several other settings \cite{AM}, \cite{Ku}, \cite{Ko2}, \cite{HP3}, \cite{KMP}, \cite{BCT}.

Understanding the $(U(\g) \otimes C(\p))^{K}$--action on the Dirac cohomology is important for studying the algebraic Dirac induction developed in \cite{PR}. The goal of algebraic Dirac induction is to construct $(\g, K)$--modules $X$ whose Dirac cohomology is (or contains) a given irreducible $\tilde{K}$--module $W$. Pand\v zi\'c and Renard give two main constructions, which satisfy certain adjunction properties with respect to (mild modifications of) Dirac cohomology. Each of the constructions has several versions, depending on how much extra structure (coming from
the $(U(\g) \otimes C(\p))^{K}$--action) one wants to put on the Dirac cohomology. On the one hand, it would be good to have all of 
$(U(\g) \otimes C(\p))^{K}$ acting, but the problem is that it is expected to be hard to study the structure and module theory of $(U(\g) \otimes C(\p))^{K}$. (Recall that in general it is very hard to study the structure and module theory of the algebra $U(\g)^{K}$, which is contained in $(U(\g) \otimes C(\p))^{K}$.)

However, for the case $G = SU(2, 1)$ the situation is much simpler than in general, and we prove that the algebra $(U(\g) \otimes C(\p))^{K}$ is generated by five elements. Two of them are in the center $Z(\frk)$ of $U(\ka)$ -- the Casimir element and the element spanning the center of $\frk$.
One of the generators is in another abelian algebra, $C(\p)^{K}$ (which is three-dimensional in this case). The fourth generator is the Dirac operator, and the fifth generator is another distinguished element that can be thought of as a $\ka$-version of the Dirac operator. 

All the above generators are sufficiently explicit so that their action on Dirac cohomology of many modules can be calculated explicitly.
This result is important for understanding the algebraic Dirac induction for the nonholomorphic discrete series of the group $SU(2,1)$ \cite{Pr}.

The strategy we use to study the algebra $(U(\frg)\otimes C(\frp))^K$ is to first study the $K$--invariants in the tensor product of the symmetric algebra $S(\g)$ of $\g$ and the exterior algebra $\twedge  (\p)$ of $\p$. Namely, the algebras $U(\g) \otimes C(\p)$ and $S(\g) \otimes \twedge  (\p)$ are isomorphic as $K$--modules, and the algebra structure of $S(\g) \otimes \twedge  (\p)$ is much simpler.

The paper is organized as follows. In Section $2$ we describe the $K$-module structure of $S(\ka)$ and $S(\p)$. These are special cases of results of Kostant
\cite{K} and Kostant-Rallis \cite{KR}, but our description is elementary and completely explicit. Then we calculate the dimension of the space of invariants in $S(\g) \otimes \twedge  (\p)$ for each degree. In Section $3$ we give a basis for the vector space $(S(\g) \otimes \twedge  (\p))^{K}$. Finally, in Section $4$ we prove the main result, that the algebra $(U(\g) \otimes C(\p))^{K}$ is generated by the five elements mentioned above.

Among the side results we obtain, let us mention the well known fact $U(g)^K=Z(\frg)Z(\frk)\cong Z(\frg)\otimes Z(\frk)$ \cite{J}. 
In particular, $U(\frg)^K$ is abelian and this explains why all irreducible $(\frg,K)$-modules have only multiplicity one
$K$-types. (Namely, it is part of the above mentioned results of \cite{HC} and \cite{LMC} that $U(\frg)^K$ acts irreducibly on the multiplicity space of each $K$-type of an irreducible $(\frg,K)$-module). For a general pair $(\frg,K)$ with $\frg$ simple noncompact and $K$ connected, $U(\frg)^K=U(\frg)^\frk$ is not abelian, but its center is always $Z(\frg)Z(\frk)\cong Z(\frg)\otimes Z(\frk)$, as was shown by Knop in \cite{Kn}.

We also prove that $(U(\g) \otimes C(\p))^{K}$ is a free module over $U(\frg)^K$, of rank $16=\dim C(\frp)$. We hope to be able to generalize this result in future.

I would like to thank Pavle Pand\v{z}i\'{c} and Hrvoje Kraljevi\'{c} for all suggestions and ideas they shared with me. 

\vspace{.2in}

\section{Degrees of invariants in $S(\g) \otimes \twedge  (\p)$}

We will denote by $G$ the Lie group 
\[
SU(2,1) = \{ g \in SL(3, \mathbb{C}) \, | \, g^* \gamma g = \gamma \},
\]
where $\gamma = \begin{pmatrix}1 & 0 & 0\cr 0 & 1 & 0\cr 0 & 0& -1\end{pmatrix}$. 
The (real) Lie algebra of $G$ is 
\[
\g_0 = \mathfrak{su}(2,1)= \{ x \in \mathfrak{sl}(3, \mathbb{C}) \, | \, x^* = - \gamma x \gamma\}.
\] 
The complexification of $\frg_0$ is $\g=\frsl(3,\bbC)$. One basis for $\g$ is given by:
\begin{gather*}
H_1 = \frac{1}{3}(2e_{11}-e_{22} - e_{33}),\quad H_2  = \frac{1}{3}(-e_{11}+2e_{22} - e_{33}), \\ 
E  = e_{12}, \quad F  = e_{21},\quad E_1  = e_{13}, \quad  E_2  = e_{23}, \quad  F_1  = e_{31}, \quad  F_2  = e_{32},
\end{gather*}
where $e_{ij}$ denotes the usual matrix unit: it has the $ij$ entry equal to 1 and all other entries equal to 0. The elements $H_1$ and $H_2$ do not look the simplest possible, but they fit well with the subsequent computations.
The commutation relations are given by
\begin{align}\label{tcom}
&[H_1, E_1] = E_1, \quad &[H_2, E_1] = 0, \quad &[H_1, E_2] = 0,  \quad &[H_2, E_2] = E_2 \\
&[H_1, F_1] = - F_1, \quad &[H_2, F_1] = 0, \quad &[H_1, F_2] = 0, &[H_2, F_2]  = -F_2 \notag \\
&[H_1, E] = E , \quad &[H_2, E] = - E , \quad &[H_1, F] = - F , \quad &[H_2, F] = F \notag,
\end{align}
Let $\g = \ka \oplus \p$ be the Cartan decomposition of $\g$ corresponding to the usual Cartan involution $\theta (X) = - X^{*}$. Then
\[
\ka = \cspan  \{ H_1, H_2, E_, F \} \cong \mathfrak{gl}(2, \mathbb{C}),\qquad \text{ and }\quad \p = \cspan  \{ E_1, E_2, F_1, F_2 \}.
\]
We denote the elements $H_1-H_2$ and $H_1+H_2$ of $\frk$ by $H$ respectively $a$. Then the semisimple part of $\frk$ is 
\[
\ka_s = \cspan   \{ H, E, F \} \cong \frsl(2,\bbC),
\]
with $H$, $E$ and $F$ corresponding to the standard basis of $\frsl(2,\bbC)$, while the center of $\frk$ is equal to $\bbC a$. 
We also set
\[
b = H^2 + 4 EF \quad \in S(\ka_s) \subset S(\g).
\]
(Note that $b$ symmetrizes to a multiple of the Casimir element of $U(\ka_s)$.) We will view $a\in\frk$ as an element of $S(\frk)$. Both $a$ and $b$ are easily seen to be $K$-invariant.

\begin{lemma}
\label{lemmaK}
For each integer $n\geq 2$, 
\[
S^{n}(\ka_s) = V_{2n} \oplus b S^{n-2}(\ka_s)
\] 
as a $\frk_s$-module, where $V_{2n}$ is the $\mathfrak{sl}(2, \mathbb{C})$--module with the highest weight $2n$ and a highest weight vector $E^n$. Furthermore, $S^{1}(\ka_s)=\frk_s$ is the module $V_2$ with the highest weight 2 and a highest weight vector $E$, and $S^{0}(\ka_s)$ is the trivial module 
$V_0$ spanned by 1.
\end{lemma}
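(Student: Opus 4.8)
The plan is to work with the adjoint action of $\ka_s\cong\frsl(2,\bbC)$ on $S(\ka_s)$, extended from $\ad$ on $\ka_s$ as derivations, and to exhibit the claimed decomposition directly in each degree $n$ rather than by induction. First I would record the weights: from the relations $[H,E]=2E$ and $[H,F]=-2F$, the monomial basis $E^aH^cF^d$ (with $a+c+d=n$) of $S^n(\ka_s)$ consists of $H$-weight vectors of weight $2(a-d)$. In particular the top weight $2n$ occurs only for $E^n$, so the weight-$2n$ space is one-dimensional. Since $\ad(E)$ annihilates $E$ (as $[E,E]=0$) and acts as a derivation, $\ad(E)\,E^n=0$; thus $E^n$ is a highest weight vector of weight $2n$. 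As $S^n(\ka_s)$ is a finite-dimensional module over the semisimple algebra $\frsl(2,\bbC)$, the submodule generated by $E^n$ is a finite-dimensional highest weight module, hence the irreducible $V_{2n}$ of dimension $2n+1$. The cases $n=0,1$ are immediate: $S^0(\ka_s)=V_0$ and $S^1(\ka_s)=\ka_s\cong V_2$ with highest weight vector $E$.

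Next I would use the invariance of $b=H^2+4EF$. Since $b$ is $\ka_s$-invariant, multiplication by $b$ is a map of $\ka_s$-modules $S^{n-2}(\ka_s)\to S^n(\ka_s)$, and it is injective because $S(\ka_s)$ is an integral domain; hence $bS^{n-2}(\ka_s)$ is a submodule isomorphic to $S^{n-2}(\ka_s)$. The key point is then to show $V_{2n}\cap bS^{n-2}(\ka_s)=0$. Multiplication by $b$ preserves weights (as $b$ has weight $0$), so the weights occurring in $bS^{n-2}(\ka_s)$ are exactly those of $S^{n-2}(\ka_s)$, namely even integers in the range $[-(2n-4),\,2n-4]$; in particular the weight $2n$ does not occur there. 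On the other hand $V_{2n}$ contains the weight-$2n$ vector $E^n$. Since $V_{2n}$ is irreducible, the submodule $V_{2n}\cap bS^{n-2}(\ka_s)$ is either $0$ or all of $V_{2n}$; in the latter case it would contain $E^n\in bS^{n-2}(\ka_s)$, contradicting the weight bound. Hence the sum $V_{2n}+bS^{n-2}(\ka_s)$ is direct.

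Finally I would close the argument by a dimension count. We have $\dim S^n(\ka_s)=\binom{n+2}{2}$, $\dim V_{2n}=2n+1$, and $\dim S^{n-2}(\ka_s)=\binom{n}{2}$, and one checks the identity $(2n+1)+\binom{n}{2}=\binom{n+2}{2}$. Combined with the direct-sum inclusion $V_{2n}\oplus bS^{n-2}(\ka_s)\subseteq S^n(\ka_s)$ established above, this forces equality, which is the assertion.

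I expect the main (though still mild) obstacle to be the disjointness $V_{2n}\cap bS^{n-2}(\ka_s)=0$: the rest is bookkeeping with weights and dimensions, whereas this step is where one must combine the irreducibility of $V_{2n}$ with the weight bound on $bS^{n-2}(\ka_s)$. An alternative would be to run the whole statement as an induction on $n$, peeling off the summand $V_{2n}$ and applying the inductive hypothesis to $S^{n-2}(\ka_s)$; but the direct weight-plus-dimension argument avoids the inductive setup and seems cleaner here.
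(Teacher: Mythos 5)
Your proposal is correct and follows essentially the same route as the paper's proof: exhibit $E^n$ as a highest weight vector generating $V_{2n}$, use the weight bound on $bS^{n-2}(\ka_s)$ (all weights $\leq 2n-4$) to get $V_{2n}\cap bS^{n-2}(\ka_s)=0$, and conclude by the dimension count $(2n+1)+\binom{n}{2}=\binom{n+2}{2}$. The only difference is that you spell out details the paper leaves implicit (that $\ad(E)E^n=0$, that multiplication by $b$ is injective because $S(\ka_s)$ is a domain, and the irreducibility argument for the intersection), which is fine.
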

\begin{proof}
The cases $n=0,1$ are obvious. Let $n\geq 2$. 
It is clear from the commutator table (\ref{tcom}) that $E^{n}$ is a vector of weight $2n$. Furthermore, 
\[
\dim   V_{2n} = 2n+1; \quad \dim b S^{n-2}(\ka_s) = \binom{n}{2}; \quad \dim   S^{n}(\ka_s) = \binom{n + 2}{2}.
\]
It follows that 
$$
\dim  S^{n}(\ka_s) = \dim b S^{n-2}(\ka_s) + \dim   V_{2n}.
$$
On the other hand, all the weights in the $\mathfrak{sl}(2, \C)$--module $b S^{n-2}(\ka_s)$ are strictly smaller than $2n$, so 
$V_{2n} \cap b S^{n-2}(\ka_s) = 0$. Hence the sum is direct
\end{proof}

\begin{remark}
\label{rmkK}
{\rm 
Lemma \ref{lemmaK} implies a special case of Kostant's theorem \cite{K}, which says that for any complex semisimple Lie algebra, the symmetric algebra decomposes into a tensor product of the subalgebra of invariants and the space of harmonics. In our case, the algebra of invariants is clearly $\bbC[b]$, while the space of harmonics is
\eq
\label{hks}
\caH_{\frk_s}=\bigoplus_{n\in\bbZ_+} V_{2n}.
\eeq
If we take this simply as notation (and leave to the interested reader to check that this indeed agrees with Kostant's definition of harmonics), then Lemma \ref{lemmaK} implies that, as a $\frk_s$-module,
\eq
\label{Kks}
S(\frk_s) \cong S(\frk_s)^{\frk_s}\otimes \caH_{\frk_s}=\bbC[b]\otimes \caH_{\frk_s}.
\eeq
}
\end{remark}

Since $\frk=\bbC a\oplus\frk_s$, it follows $S(\frk)=\bbC[a]\otimes S(\frk_s)$, and so 
\eq
\label{Kk}
S(\frk) \cong \bbC[a,b]\otimes \caH_{\frk_s}.
\eeq
The $\frk_s$-action on $\caH_{\frk_s}$ is given by (\ref{hks}), and the action of $a\in\frk$ is trivial. As usual, we will label finite-dimensional $\frk$-modules by their highest weights, which we identify with pairs $(\alpha,\beta)\in\bbC^2$ such that $\alpha-\beta\in\bbZ_+$. A vector $v$ in a 
$\frk$-module is of weight $(\alpha,\beta)$ if $H_1v=\alpha v$ and $H_2v=\beta v$. The finite-dimensional $\frk$-module with highest weight 
$(\alpha,\beta)$ will be denoted by $V_{(\alpha,\beta)}$. 
With this notation we have
\eq
\label{hk}
\caH_{\frk_s}=\bigoplus_{n\in\bbZ_+} V_{(n,-n)}
\eeq
as $\frk$-modules. Namely, the module $V_{(n,-n)}$ is equal to $V_{2n}$ as a $\frk_s$ module, and $a=H_1+H_2$ acts trivially on $V_{(n,-n)}$.

We now turn to analyzing the $K$-structure of $S(\frp)$. It is easy to see that the element
\[
c = E_1 F_1 + E_2 F_2
\]
is $K$-invariant. 

\begin{lemma}
\label{lemmaKR}
Let $V_{(n-i, -i)} \subset S(\p)$ be the $\ka$--module with highest weight $(n-i, -i)$ and highest weight vector $E_{1}^{n-i} F_{2}^{i}$. Then for 
$n\geq 2$ we have
$$
S^{n}(\p) = \left(  V_{(n, 0)} \oplus V_{(n-1, -1)} \oplus \cdots \oplus V_{(0, -n)} \right) \oplus c S^{n-2}(\p).
$$
Furthermore, $S^0(\p)$ is a trivial $\frk$-module spanned by $1$, while $S^1(\frp)=\frp=\frp^+\oplus\frp^-= V_{(1,0)} \oplus V_{(0,-1)}$.
\end{lemma}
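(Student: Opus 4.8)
The plan is to follow the same strategy as in Lemma~\ref{lemmaK}: produce the claimed highest weight vectors explicitly, prove the candidate pieces are in direct sum by a weight argument, and then force the decomposition to be exhaustive with a dimension count.

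First I would read off from (\ref{tcom}) the weights of the basis of $\frp$: the vectors $E_1, E_2, F_1, F_2$ have $(H_1,H_2)$-weights $(1,0),(0,1),(-1,0),(0,-1)$, hence $H$-eigenvalues (for $H=H_1-H_2$) equal to $1,-1,-1,1$. Thus $E_1^{n-i}F_2^i$ has weight $(n-i,-i)$, and since $[E,E_1]=[E,F_2]=0$ (a one-line check with (\ref{tcom})) and $E$ acts as a derivation on $S(\frp)$, each such monomial is annihilated by $E$ and so is a highest weight vector generating a copy of $V_{(n-i,-i)}$. These copies are mutually in direct sum because they carry distinct values $n-2i$ of the central element $a=H_1+H_2$.

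The heart of the argument, and the step I expect to be the main obstacle, is showing that $\bigoplus_{i=0}^n V_{(n-i,-i)}$ meets $cS^{n-2}(\frp)$ only in $0$. The idea is to compare $\frk_s\cong\frsl(2,\bbC)$-types. The maximal $H$-eigenvalue on $S^n(\frp)$ is $n$, and the corresponding weight space is spanned exactly by those degree-$n$ monomials built solely from the $H$-weight $(+1)$ vectors $E_1$ and $F_2$, i.e.\ by the $n+1$ vectors $E_1^{n-i}F_2^i$. Hence, as an $\frk_s$-module, $S^n(\frp)$ has no constituent of highest weight exceeding $n$, its constituents of highest weight exactly $n$ number precisely $n+1$, and these are the $V_{(n-i,-i)}$. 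On the other hand $c$ has weight $(0,0)$, so multiplication by $c$ preserves $H$-weights; since $S^{n-2}(\frp)$ has maximal $H$-weight $n-2$, every $\frk_s$-constituent of $cS^{n-2}(\frp)$ has highest weight at most $n-2$. Any intersection would be an $\frk_s$-submodule whose constituents are simultaneously of type $V_n$ and of type $\le V_{n-2}$, and must therefore vanish.

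Finally I would close with dimension counting. Restricted to $\frk_s$ each $V_{(n-i,-i)}$ is the irreducible of highest weight $n$, of dimension $n+1$, so $\dim\bigoplus_{i=0}^n V_{(n-i,-i)}=(n+1)^2$; and since $S(\frp)$ is a domain and $c\neq0$, multiplication by $c$ is injective, giving $\dim cS^{n-2}(\frp)=\dim S^{n-2}(\frp)=\binom{n+1}{3}$. The identity $(n+1)^2+\binom{n+1}{3}=\binom{n+3}{3}=\dim S^n(\frp)$ then shows the direct sum already fills out $S^n(\frp)$. The cases $n=0,1$ follow directly from the weight computation. (Iterating the decomposition also recovers $S(\frp)^K=\bbC[c]$, consistent with $SU(2,1)$ having real rank one.)
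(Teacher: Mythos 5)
Your proposal is correct and follows essentially the same route as the paper: exhibit $E_1^{n-i}F_2^i$ as highest weight vectors, show the sum meets $cS^{n-2}(\frp)$ trivially by comparing $\frk_s$-highest weights ($n$ versus at most $n-2$), and conclude by the dimension count $(n+1)^2+\binom{n+1}{3}=\binom{n+3}{3}$. The only differences are that you supply slightly more detail than the paper (checking $[E,E_1]=[E,F_2]=0$, separating the $V_{(n-i,-i)}$ via the eigenvalue of the central element $a$, and justifying $\dim cS^{n-2}(\frp)=\dim S^{n-2}(\frp)$ by injectivity of multiplication in the domain $S(\frp)$), which is harmless.
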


\begin{proof}  
Using the commutator table (\ref{tcom}), it is easy to see that 
\[
(\ad H_1)(E_{1}^{n-i} F_{2}^{i}) = (n-i)E_{1}^{n-i} F_{2}^{i}, \quad (\ad    H_2)(E_{1}^{n-i} F_{2}^{i}) = -iE_{1}^{n-i} F_{2}^{i}.
\] 
Since for any $i \in \{ 0, 1, 2, \cdots, n \}$, $V_{(n-i, -i)}$ is an irreducible module for $\frk_s=\mathfrak{sl}(2, \mathbb{C})$, with highest weight $n$, while the highest $\frk_s$-weight in $c S^{n-2}(\p)$ is $n-2$, we conclude
\eq
\label{pf1}
\left(  V_{(n, 0)} \oplus V_{(n-1, -1)} \oplus \cdots \oplus V_{(0, -n)} \right) \cap c S^{n-2}(\p) = 0.
\eeq
Furthermore, $\dim   \left(  V_{(n, 0)} \oplus V_{(n-1, -1)} \oplus \cdots \oplus V_{(0, -n)} \right)  = (n+1)^2$, $\dim  (S^{n}(\p)) = \binom{n + 3}{3}$ and $\text{dim } (cS^{n-2}(\p)) = \binom{n + 1}{3}$, and this implies
\[
\dim   S^{n}(\p) = \dim   (\left(  V_{(n, 0)} \oplus V_{(n-1, -1)} \oplus \cdots \oplus V_{(0, -n)} \right) \oplus c S^{n-2}(\p)).
\]
Together with (\ref{pf1}), this implies the claim for $n\geq 2$. The cases $n=0,1$ are obvious.
\end{proof}

\begin{remark}
\label{rmkKR}
{\rm Lemma \ref{lemmaKR} implies a special case of a theorem of Kostant and Rallis \cite{KR}, which says that $S(\frp)$ can be written as a tensor product of the algebra $S(\frp)^K$ of invariants and the space of harmonics. By Lemma \ref{lemmaKR}, if we define the space of harmonics as
\eq
\label{hp}
\caH_\frp=\bigoplus_{n\in\bbZ_+}\bigoplus_{i=0}^n  V_{(n-i, -i)},
\eeq 
(and one can easily check that this does agree with the definition of \cite{KR}), then we have
\eq
\label{Kp}
S(\frp)=S(\frp)^K\otimes \caH_\frp=\bbC[c]\otimes \caH_\frp.
\eeq
}
\end{remark}

Using (\ref{Kk}) and (\ref{Kp}), we can write
\eq
\label{sglpdec}
S(\g) \otimes \twedge  (\p) \cong \mathbb{C}[a, b, c] \otimes \mathcal{H}_{\ka_s} \otimes \mathcal{H_{\p}} \otimes \twedge(\p).
\eeq
It is not hard to determine the $K$-structure of $\twedge(\frp)$. The $\frk$-submodules are 

\begin{align*}
& \cspan  \{ 1 \} \cong   \cspan  \{ E_1 \wedge F_1 + E_2 \wedge F_2 \} \cong   \cspan   \{ E_1 \wedge E_2 \wedge F_1 \wedge F_2 \} \cong   V_{(0,0)} \\
& \cspan   \{ E_1, E_2 \} \cong   \cspan   \{ E_1 \wedge E_2 \wedge F_2, E_1 \wedge E_2 \wedge  F_1 \} \cong   V_{(1,0)} \\
& \cspan   \{ F_2, F_1 \} \cong   \cspan   \{ E_1 \wedge F_1 \wedge F_2, E_2 \wedge F_1 \wedge F_2 \} \cong   V_{(0, -1)} \\
& \cspan   \{ E_1 \wedge E_2 \} \cong   V_{(1,1)} \\
& \cspan   \{ F_1 \wedge F_2 \} \cong   V_{(-1, -1)} \\
& \cspan   \{ E_1 \wedge F_2, E_2 \wedge F_2 - E_1 \wedge F_1, E_2 \wedge F_1 \} \cong   V_{(1, -1)}.
\end{align*}
It follows that $\twedge(\frp)$ decomposes under $\frk$ as 
\begin{align*}
\twedge  (\p) & =  \overbrace{V_{(0, 0)}}^{4} \\
& \oplus  \overbrace{V_{(1,0)}}^{3} \oplus \overbrace{V_{(0,-1)}}^{3} \\
& \oplus  \overbrace{V_{(1,1)}}^{2} \oplus \overbrace{V_{(1,-1)}}^{2} \oplus \overbrace{V_{(-1,-1)}}^{2} \oplus \overbrace{V_{(0,0)}}^{2} \\
& \oplus  \overbrace{V_{(1,0)}}^{1} \oplus \overbrace{V_{(0,-1)}}^{1} \\
& \oplus  \overbrace{V_{(0,0)}}^{0},
\end{align*}
where each of the numbers over braces denotes the degree in which the corresponding $\frk$-module is appearing.

The element $a=H_1+H_2$ of $\frk$ acts on the module $V_{(\alpha,\beta)}$ by $\alpha+\beta$. It is clear that on any $K$-invariant, i.e., on any trivial $\frk$-module, $a$ has to act by 0. It does act by 0 on $\caH_{\frk_s}$. However, in $\caH_{\frk_s}\otimes\caH_{\frp}\otimes\twedge(\frp)$, $a$ will only act by 0 on tensor products where:
\begin{itemize}
\item $V_{(1, -1)}$ and $V_{(0,0)}$ in $\twedge(\p)$ are tensored with $V_{(k, -k)}$ in $\mathcal{H}_{\p}$; 
\item $V_{(1, 1)}$ in $\twedge(\p)$ is tensored with $V_{(k - 2, -k)}$ in $\mathcal{H}_{\p}$, $k \geq 2$; 
\item $V_{(-1, -1)}$ in $\twedge(\p)$ is tensored with $V_{(k+2, -k)}$ in $\mathcal{H}_{\p}$; 
\item $V_{(1,0)}$  in $\twedge(\p)$ is tensored with $V_{(k-1, -k)}$ in $\mathcal{H}_{\p}$, $k \geq 1$;
\item $V_{(0, -1)}$ in $\twedge(\p)$ is tensored with $V_{(k+1, -k)}$ in $\mathcal{H}_{\p}$.
\end{itemize}
In other words, all $K$--invariants in $\mathcal{H}_{\ka_s} \otimes \mathcal{H}_{\p} \otimes \twedge(\p)$ are contained in
\begin{align*}
\mathcal{H}_{\ka_s} \otimes \bigg( & \bigoplus_{k = 0}^{+ \infty} \overbrace{V_{(k, -k)}}^{2k} \otimes [\overbrace{V_{(1, -1)}}^{2} \oplus\overbrace{V_{(0, 0)}}^{4} \oplus \overbrace{V_{(0, 0)}}^{2} \oplus \overbrace{V_{(0, 0)}}^{0} ]  \\
\oplus & \bigoplus_{k = 2}^{+ \infty} \overbrace{V_{(k - 2, -k)}}^{2k - 2} \otimes \overbrace{V_{(1, 1)}}^{2} \oplus \bigoplus_{k = 0}^{+ \infty} \overbrace{V_{(k + 2, -k)}}^{2k+2} \otimes \overbrace{V_{(-1, -1)}}^{2} \\
\oplus & \bigoplus_{k = 1}^{+ \infty} \overbrace{V_{(k-1, -k)}}^{2k - 1} \otimes [\overbrace{V_{(1, 0)}}^{1} \oplus \overbrace{V_{(1, 0)}}^{3}] \oplus \bigoplus_{k = 0}^{+ \infty} \overbrace{V_{(k + 1, -k)}}^{2k + 1} \otimes [\overbrace{V_{(0, -1)}}^{1} \oplus \overbrace{V_{(0, -1)}}^{3}]\bigg).
\end{align*}

Since $a$ acts by zero on the above $\ka$--module, it is enough to regard it as an $\mathfrak{sl}(2, \mathbb{C})$--module:
\begin{align*}
\left( \bigoplus_{n = 0}^{+ \infty} \overbrace{V_{2n}}^{n} \right) \otimes \bigg( & \bigoplus_{k = 0}^{+ \infty} \overbrace{V_{2k}}^{2k} \otimes [ \overbrace{V_{2}}^{2} \oplus \overbrace{V_{0}}^{4} \oplus \overbrace{V_{0}}^{2} \oplus \overbrace{V_{0}}^{0} ]  \\
\oplus & \bigoplus_{k = 2}^{+ \infty} \overbrace{V_{2k - 2}}^{2k - 2} \otimes \overbrace{V_{0}}^{2} \oplus \bigoplus_{k = 0}^{+ \infty} \overbrace{V_{2k + 2}}^{2k+2} \otimes \overbrace{V_{0}}^{2} \\
\oplus & \bigoplus_{k = 1}^{+ \infty} \overbrace{V_{2k-1}}^{2k-1} \otimes [\overbrace{V_{1}}^{1} \oplus \overbrace{V_{1}}^{3}] \oplus \bigoplus_{k = 0}^{+ \infty} \overbrace{V_{2k + 1}}^{2k+1} \otimes [\overbrace{V_{1}}^{1} \oplus \overbrace{V_{1}}^{3}]
\bigg).
\end{align*}
Now using
\begin{align*}
& V_{2k} \otimes V_2 \cong   V_{2k+2} \oplus V_{2k} \oplus V_{2k-2} & \text{ for } k \geq 1 \\
& V_{2k} \otimes V_0 \cong   V_{2k}  & \text{for } k \geq 0 \\
& V_{2k-1} \otimes V_1 \cong   V_{2k}  \oplus V_{2k-2} & \text{ for } k \geq 1
\end{align*}
we have
\begin{align*}
\left( \bigoplus_{n = 0}^{+ \infty} \overbrace{V_{2n}}^{n} \right) & \otimes \bigg( \overbrace{V_2}^{2} \oplus \bigoplus_{k = 1}^{+ \infty} (\overbrace{V_{2k+2}}^{2k+2} \oplus \overbrace{V_{2k}}^{2k+2} \oplus \overbrace{V_{2k-2}}^{2k+2} ) \oplus \bigoplus_{k = 0}^{+ \infty} (\overbrace{V_{2k}}^{2k+4} \oplus \overbrace{V_{2k}}^{2k+2} \oplus \overbrace{V_{2k}}^{2k} ) \\
& \oplus ( \bigoplus_{k = 2}^{+ \infty} \overbrace{V_{2k-2}}^{2k}  ) \oplus ( \bigoplus_{k = 0}^{+ \infty} \overbrace{V_{2k+2}}^{2k + 4} ) \oplus \bigoplus_{k = 1}^{+ \infty} (\overbrace{V_{2k}}^{2k}  \oplus \overbrace{V_{2k-2}}^{2k} \oplus \overbrace{V_{2k}}^{2k+2} \oplus \overbrace{V_{2k-2}}^{2k+2} ) \\
& \oplus \bigoplus_{k = 0}^{+ \infty} (\overbrace{V_{2k+2}}^{2k+2}  \oplus \overbrace{V_{2k}}^{2k+2} \oplus \overbrace{V_{2k+2}}^{2k+4} \oplus \overbrace{V_{2k}}^{2k+4} ) \bigg).
\end{align*}
For $i \geq j$ we have $V_i \otimes V_j = V_{i+j} \oplus V_{i+j-2}\oplus\cdots \oplus V_{i-j}$. Therefore, an invariant (exactly one, up to scalar) will show up in $V_{i} \otimes V_{j}$ if and only if $i = j$.
It follows that the degrees of the invariants in the above tensor products, listed in the order of the summands of the second factor, are:
\begin{align*}
& 3, \quad 3k+3, 3k+2, 3k+1, \text{ for } k \geq 1, \quad 3k+4, 3k+2, 3k, \text{ for } k \geq 0, \\ & 3k-1, \text{ for } k \geq 2, \quad 3k+5,  \text{ for } k \geq 0, \quad 3k, 3k - 1, 3k + 2, 3k + 1, \text{ for } k \geq 1, \\
& 3k+3, 3k+2, 3k+5, 3k+4, \text{ for } k \geq 0.
\end{align*}

From this we conclude that in $\mathcal{H}_{\ka_s} \otimes \mathcal{H}_{\p} \otimes \twedge(\p)$ we have the following table

\bigskip

\begin{center}
\begin{tabular}{|c|c|}
  \hline
  degree & number of linearly independent invariants \\ \hline
  $0$ & $1$ \\
  $1$ & $0$ \\
  $2$ & $3$ \\
  $3k, \quad k \geq 1$ & $4$ \\
  $3k+1, \quad k \geq 1$ & $4$ \\
  $3k+2, \quad k \geq 1$ & $8$ \\
  \hline
\end{tabular}
\end{center}

\bigskip
\bigskip

In view of (\ref{sglpdec}), we have proved:

\begin{prop}
\label{propdeginv}
The algebra of $K$-invariants in $S(\frg)\otimes\twedge(\frp)$ can be written as
\[
(S(\frg)\otimes\twedge(\frp))^K=\bbC[a,b,c]\otimes (\mathcal{H}_{\ka_s} \otimes \mathcal{H}_{\p} \otimes \twedge(\p))^K.
\]
The number of invariants in the second factor in each degree is given by the above table.
\end{prop}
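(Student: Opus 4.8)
The plan is to deduce the proposition in two stages: first the algebra-level product decomposition, then the degree-by-degree count of invariants in the second factor.

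For the first stage, I would start from the $K$-module isomorphism $S(\frg)\cong S(\frk)\otimes S(\frp)$, which holds because the Cartan decomposition $\frg=\frk\oplus\frp$ is $K$-stable and $S$ of a direct sum is the tensor product of the symmetric algebras. Substituting the decompositions (\ref{Kk}) and (\ref{Kp}) coming from Lemma \ref{lemmaK} and Lemma \ref{lemmaKR} (i.e.\ from the special cases of the theorems of Kostant and Kostant--Rallis), and then tensoring with $\twedge(\frp)$, yields exactly the isomorphism (\ref{sglpdec}). The key point is that $a$, $b$, $c$ are themselves $K$-invariant, so the polynomial algebra $\bbC[a,b,c]$ consists entirely of invariants. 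Consequently, for any $K$-module $M$ one has $(\bbC[a,b,c]\otimes M)^K=\bbC[a,b,c]\otimes M^K$, and applying this with $M=\mathcal{H}_{\ka_s}\otimes\mathcal{H}_\frp\otimes\twedge(\frp)$ gives the asserted product formula for $(S(\frg)\otimes\twedge(\frp))^K$. This stage is essentially formal once the two structure lemmas are in place.

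For the second stage, the plan is to compute $(\mathcal{H}_{\ka_s}\otimes\mathcal{H}_\frp\otimes\twedge(\frp))^K$ in each degree. The crucial reduction is to use the central element $a=H_1+H_2$: since $a$ acts on $V_{(\alpha,\beta)}$ by the scalar $\alpha+\beta$, and must act by $0$ on any trivial $\frk$-submodule, only those tensor products of summands whose total $(\alpha,\beta)$-weights sum to $(0,0)$ can contribute an invariant. I would first enumerate these eligible combinations by matching each irreducible summand of $\twedge(\frp)$ against the unique family $V_{(k,-k)},\,V_{(k\pm1,-k)},\,V_{(k\pm2,-k)}$ in $\mathcal{H}_\frp$ that cancels its central weight, exactly as listed before the proposition. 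Once the central character is zero, the residual problem is purely about $\ka_s\cong\frsl(2,\bbC)$, so I may discard the $a$-action and regard all factors as $\frsl(2,\bbC)$-modules $V_{2n}$ and $V_m$.

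The counting itself then rests on the Clebsch--Gordan rule: for $\frsl(2,\bbC)$ one has $V_i\otimes V_j=\bigoplus_\ell V_{i+j-2\ell}$, and a one-dimensional invariant occurs in $V_i\otimes V_j$ precisely when $i=j$, and then with multiplicity one. Applying the three product formulas for $V_{2k}\otimes V_2$, $V_{2k}\otimes V_0$, and $V_{2k-1}\otimes V_1$ to expand each eligible term, and then pairing the resulting even-index factors against the $V_{2n}$ coming from $\mathcal{H}_{\ka_s}$, produces one invariant for each matching pair, carrying the degree recorded by the overbraces. Collecting these degrees over all summands and all $k$ yields the stated list of degrees, which I would finally tabulate to obtain the table. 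The main obstacle is not conceptual but organizational: the bookkeeping across several infinite families, with the degree shifts introduced both by the Clebsch--Gordan expansion and by the grading of $\twedge(\frp)$, must be tracked carefully so that no invariant is omitted or counted twice. Checking that the harmonic spaces $\mathcal{H}_{\ka_s}$ and $\mathcal{H}_\frp$ defined in (\ref{hk}) and (\ref{hp}) really exhaust the invariant-free part is a secondary point, but it is already settled by the two lemmas.
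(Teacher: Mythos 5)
Your proposal is correct and follows essentially the same route as the paper: the product decomposition comes from substituting the two structure lemmas (the Kostant and Kostant--Rallis special cases) into $S(\frg)\cong S(\frk)\otimes S(\frp)$ and pulling the trivial factor $\bbC[a,b,c]$ out of the invariants, and the count is done exactly as in the paper by filtering tensor-product summands through the central character of $a$ and then applying the $\frsl(2,\bbC)$ Clebsch--Gordan rule (invariant iff equal highest weights) with the same degree bookkeeping.
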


\section{A basis of $(S(\g) \otimes \twedge(\p))^{K}$}

Recall that we have defined elements $a,b$ and $c$ of $S(\frg)$. Now we view them as elements of $S(\g) \otimes \twedge(\p)$ by the identification
$S(\frg)=S(\frg)\otimes 1$. We also define further elements, which are all easily checked to be $K$-invariant:
\begin{align*}
& a = (H_1 + H_2) \otimes 1, \\
& b = (H^2 + 4 EF) \otimes 1, \\
& c = (E_1 F_1 + E_2 F_2) \otimes 1, \\
& d = (2 E E_2 F_1 + H E_1 F_1 - H E_2 F_2 + 2 F E_1 F_2) \otimes 1, \\
& e = F_1 \otimes E_1 + F_2 \otimes E_2, \\
& f = E_1 \otimes F_1 + E_2 \otimes F_2, \\
& g = 1 \otimes (E_1 \wedge F_1 + E_2 \wedge F_2), \\
& h = (2 E E_2 + H E_1) \otimes F_1 + (- H E_2 + 2 F E_1) \otimes F_2, \\
& i = 2 E \otimes E_2 \wedge F_1 + H \otimes E_1 \wedge F_1 - H \otimes E_2 \wedge F_2 + 2 F \otimes E_1 \wedge F_2, \\
& j = (H F_1 + 2 F F_2) \otimes E_1 + (2 E F_1 - H F_2) \otimes E_2.
\end{align*}
\begin{prop}\label{basis}
Let $S$ and $T$ be the following subsets of $(S(\g) \otimes \twedge(\p))^{K}$:
\begin{align*}
& S = \{ a^{n_1} b^{n_2} c^{n_3} d^{n_4} \, | \, n_1, n_2, n_3, n_4 \in \mathbb{N}_{0} \} \\
& T = \{ 1, e, f, g, h, i, j, ef, eg, fg, g^2, ei, ej, fh, fi, fj \}.
\end{align*}
Then the set $S \cdot T$ of products of elements of $S$ and $T$ in the algebra $S(\g) \otimes \twedge(\p)$ is a
basis for $(S(\g) \otimes \twedge(\p))^{K}$.
\end{prop}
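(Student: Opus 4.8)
The plan is to verify two things: that $S\cdot T$ is a linearly independent set, and that it spans $(S(\g)\otimes\twedge(\p))^K$. Since $(S(\g)\otimes\twedge(\p))^K$ is graded, it suffices to work degree by degree and compare with the dimension count already established in Proposition \ref{propdeginv}. First I would record the degrees of the listed generators: $a$ has degree $1$; $b,c,g$ have degree $2$; $e,f$ have degree $2$; $d,h,i,j$ have degree $3$. (Here the degree is the total degree in $S(\g)\otimes\twedge(\p)$, i.e.\ symmetric degree plus exterior degree.) Likewise each element of $T$ has a well-defined degree. The strategy is then to show that, in every degree $m$, the number of products $s\cdot t$ with $s\in S$, $t\in T$ landing in degree $m$ equals exactly the dimension of $(S(\g)\otimes\twedge(\p))^K$ in that degree, as given by the table, and that these products are linearly independent.

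The spanning/counting step I would organize using the decomposition from Proposition \ref{propdeginv}, namely $(S(\g)\otimes\twedge(\p))^K=\bbC[a,b,c]\otimes(\caH_{\ka_s}\otimes\caH_\p\otimes\twedge(\p))^K$. The factor $\bbC[a,b,c]$ is accounted for by the monomials $a^{n_1}b^{n_2}c^{n_3}$ appearing in $S$; the remaining generator $d$ in $S$ and the sixteen elements of $T$ should, after projecting onto the ``harmonic'' factor, give a basis of the finite table appearing in Proposition \ref{propdeginv}. Concretely, I expect $d$ to contribute the one ``extra'' harmonic invariant that is not already of the form (monomial in $a,b,c$) times (element of $T$), so that $\{d^{n_4}\}\cdot T$ reproduces the counts $1,0,3,4,4,8$ modulo the free polynomial algebra $\bbC[a,b,c]$. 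I would therefore compute the image of each element of $T$ and of $d$ under the projection to $\caH_{\ka_s}\otimes\caH_\p\otimes\twedge(\p)$ and check that in each of the small degrees $0,1,2$ and then in the periodic pattern $3k,3k+1,3k+2$ they hit the right number of independent invariants; the generating-function bookkeeping for $S\cdot T$ matches the table precisely because $T$ has one element of degree $0$, none of degree $1$, the elements $e,f,g$ (degree $2$) together with products, etc., while $d$ and $c$ raise degrees by $3$ and $2$ respectively, producing the eventual periodicity.

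For linear independence I would pass to the associated graded / leading-term picture: order the monomials in $S(\g)\otimes\twedge(\p)$ (say by a term order refining the bigrading and distinguishing the $\ka$, $\p^+$, $\p^-$, and exterior parts) and show that the leading monomials of the distinct products $s\cdot t$ are pairwise distinct. This reduces independence to a combinatorial comparison of leading terms, which one reads off from the explicit formulas for $a,\dots,j$: the generators $a,b,c$ involve only $\ka$ or the invariant $c=E_1F_1+E_2F_2$, while $d,e,f,h,i,j$ carry distinguishing $\p$- and $\wedge(\p)$-content, so products cannot collide. The main obstacle I anticipate is this independence/non-collision verification: because $b,c$ are not monomials but genuine quadratic invariants, and because $d$ is a cubic invariant that could a priori be expressible through lower products together with $a,b,c$, one must check carefully that no nontrivial polynomial relation holds among the generators in the ranges where the counts are tight (especially the degree-$3k+2$ rows, where eight invariants must be matched). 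Once the leading terms are pinned down and shown distinct, independence follows, and combined with the exact dimension match from the table this forces $S\cdot T$ to be a basis.
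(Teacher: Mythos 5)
Your spanning argument is essentially the paper's: once linear independence is known, one counts the elements of $S\cdot T$ degree by degree (equivalently, counts $\{d^{n_4}\}\cdot T$ against the free factor $\bbC[a,b,c]$) and checks that the counts $1,0,3,4,4,8$ reproduce the table of Proposition \ref{propdeginv}; the detour you propose through projections onto $\caH_{\ka_s}\otimes\caH_\p\otimes\twedge(\p)$ is unnecessary for this. The genuine gap is in the independence half, which you correctly flag as the main obstacle but do not carry out, and the mechanism you propose for it would fail as stated. A term-order argument rests on the identity $\mathrm{LT}(xy)=\mathrm{LT}(x)\,\mathrm{LT}(y)$, and this breaks down in $S(\g)\otimes\twedge(\p)$ precisely because the second factor is an exterior algebra: products of nonzero exterior monomials can vanish. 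For example, any choice of leading term of $g=1\otimes(E_1\wedge F_1+E_2\wedge F_2)$ has square zero, yet $g^2=2\otimes E_1\wedge F_1\wedge E_2\wedge F_2\neq 0$; similarly, natural choices of leading terms for $e$ and $j$ (say $F_1\otimes E_1$ and $HF_1\otimes E_1$) wedge to zero, although $ej\neq 0$. So for the products $g^2, ej, fh, ei, \dots$ in $T$ the leading monomial comes from cross terms and cannot simply be ``read off'' as a product of leading monomials of the factors; pinning these down is exactly the content that is missing from your plan.

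The paper supplies precisely this content, and its method is the natural repair of your idea. It first sorts the elements of $S\cdot T$ by which exterior monomials occur in the second tensor factors; this splits the problem into six families (for instance $S\cdot\{e\}\cup S\cdot\{j\}$, whose second factors are $E_1,E_2$; or $S\cdot\{i\}\cup S\cdot\{ef\}\cup S\cdot\{fj\}\cup S\cdot\{g\}$, whose second factors are the $E_i\wedge F_j$), with independence across families automatic. Within each family it then extracts an explicit monomial component of the symmetric factor: e.g.\ the only term of $b^{n_2}c^{n_3}d^{n_4}$ containing no $H$ and no $F_2$ is $(4EF)^{n_2}(E_1F_1)^{n_3}(2EE_2F_1)^{n_4}$, and these monomials are distinct for distinct $(n_2,n_3,n_4)$; variants of this extraction (no $H$ and no $E_2$, no $H$ and no $F_1$, \dots) kill the coefficients family by family. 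This is coefficient extraction in the same spirit as your leading-term idea, but organized so that exterior-algebra collisions never arise --- which is the step your proposal leaves unresolved.
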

\begin{proof}  
We first prove the linear independence of the set $S \cdot T$. Notice that it is enough to prove the linear independence of following sets:
\begin{enumerate}[a)]
\item $S$,
\item $S \cdot \{ e \} \cup S \cdot \{ j \}$,
\item $S \cdot \{ f \} \cup S \cdot \{ h \}$,
\item $S \cdot \{ i \} \cup S \cdot \{ ef \} \cup S \cdot \{ fj \} \cup S \cdot \{ g \}$,
\item $S \cdot \{ eg \} \cup S \cdot \{ ei \}$,
\item $S \cdot \{ fg \} \cup S \cdot \{ fi \}$.
\end{enumerate}
Namely, the rest of the independence then follows by considering just the second factors in the tensor products. 
We deal with each of the cases a) -- f) separately.

\begin{enumerate}[a)]
\item Let $\sum_{i \in \mathcal{I}} \lambda_{i} \cdot a^{n_{1,i}} b^{n_{2,i}} c^{n_{3, i}} d^{n_{4, i}} = 0$ and let $\{ m_1, m_2, \cdots, m_k \} = \{ n_{1, i} \, | \, i \in \mathcal{I} \}$, where $m_1 < m_2 < \cdots < m_k$. Since one basis of $\g$ is given by $\{ H_1 + H_2, H, E, F, E_1, F_1, E_2, F_2 \}$, from
$$
\sum_{j = 1}^{k} a^{m_j} \left(\sum_{i \in \mathcal{I}, n_{1, i} = m_j} \lambda_{i} b^{n_{2, i}} c^{n_{3, i}} d^{n_{4, i}} \right) = 0,
$$
we have $\sum_{i \in \mathcal{I}, n_{1, i} = m_j} \lambda_{i} b^{n_{2, i}} c^{n_{3, i}} d^{n_{4, i}} = 0$ for all $j \in \{1, \cdots, k\}$. It is thus enough to prove that the set $\{ b^{n_2} c^{n_3} d^{n_4}\, | \, n_2, n_3, n_4 \in \mathbb{N}_{0}\}$ is linearly independent. Let $\sum_{i \in \mathcal{I}}  \lambda_{i} b^{n_{2, i}} c^{n_{3, i}} d^{n_{4, i}} = 0$. In the expansion of the summand
$$
(H^2 + 4 EF)^{n_{2, i}}(E_1 F_1 + E_2 F_2)^{n_{3, i}}(2 E E_2 F_1 + H(E_1 F_1 - E_2 F_2) + 2 F E_1 F_2)^{n_{4, i}}
$$ we consider the terms without $H$ and $F_2$. There is only one such term and it is $(4E F)^{n_{2, i}}(E_1 F_1)^{n_{3, i}}(2 E E_2 F_1)^{n_{4, i}}$. We have
$$
\sum_{i \in \mathcal{I}} \lambda_{i} (4EF)^{n_{2, i}}(E_1 F_1)^{n_{3, i}} (2E E_2 F_1)^{n_{4, i}} = 0.
$$
Let $\{t_1, \cdots, t_l \} = \{ n_{4, i} \, | \, i \in \mathcal{I} \}, t_1 < t_2 < \cdots < t_l$. We have
$$
\sum_{j = 1}^{l} (2E_{2})^{t_j} (\sum_{i \in \mathcal{I}, n_{4, i} = t_j} \lambda_i (4EF)^{n_{2, i}}(E_1 F_1)^{n_{3, i}}(EF_1)^{t_j}) = 0
$$
and from here $\sum_{i \in \mathcal{I}, n_{4, i} = t_j} \lambda_i (4EF)^{n_{2, i}}(E_1 F_1)^{n_{3, i}} = 0$ for all $j \in \{ 1, \cdots, l \}$.
It follows $\lambda_i = 0$ for all $j \in \{ 1, \cdots, l \}$.

\item We consider summands of the form $\cdot \otimes E_1$. As in the previous case, it is enough to show the linear independence of the set
$$
\{ b^{n_2} c^{n_3} d^{n_4} F_1 \, | \, n_2, n_3, n_4 \in \mathbb{N}_{0} \} \cup \{ b^{n_2} c^{n_3} d^{n_4}(HF_1 + 2 F F_2) \, | \, n_2, n_3, n_4 \in \mathbb{N}_{0} \}.
$$
Let $\sum_{i \in \mathcal{I}} \lambda_i b^{n_{2, i}} c^{n_{3, i}} d^{n_{4, i}} F_1 + \sum_{j \in \mathcal{J}} \lambda_j b^{n_{2, j}} c^{n_{3, j}} d^{n_{4, j}} (H F_1 + 2 F F_2) = 0$.
As in case $a)$, we consider the summands without $H$ and $F_2$ and we get
$$
\sum_{i \in \mathcal{I}} \lambda_i (4EF)^{n_{2, i}} (E_1 F_1)^{n_{3, i}} (2E E_2 F_1)^{n_{4, i}} F_1 = 0.
$$
By the same arguments as in case $a)$, we get $\lambda_i = 0$ for all $i \in \mathcal{I}$. Then we have $\sum_{j \in \mathcal{J}}\lambda_j b^{n_{2, j}} c^{n_{3, j}} d^{n_{4, j}} (H F_1 + 2 F F_2) = 0$ and $$\sum_{j \in \mathcal{J}} \lambda_j b^{n_{2, j}} c^{n_{3, j}} d^{n_{4, j}} = 0.$$ Since we have already proved that the set $\{ b^{n_2} c^{n_3} d^{n_4}\, | \, n_2, n_3, n_4 \in \mathbb{N}_{0}\}$ is linearly independent, it follows $\lambda_j = 0$ for all $j \in \mathcal{J}$.

\item Let us consider summands of the form $\cdot \otimes F_1$. It is enough to prove the linear independence of the set
$$
\{ b^{n_2} c^{n_3} d^{n_4} E_1 \, | \, n_2, n_3, n_4 \in \mathbb{N}_{0} \} \cup \{ b^{n_2} c^{n_3} d^{n_4}(HE_1 + 2 E E_2) \, | \, n_2, n_3, n_4 \in \mathbb{N}_{0} \}.
$$
The proof is similar to the case $b)$, only in this case we consider summands without $H$ and $E_2$.

\item In this case we consider summands of the form $\cdot \otimes E_2 \wedge F_2$. It is enough to prove the linear independence of the set
\begin{align*}
& \{ b^{n_2} c^{n_3} d^{n_4} \, | \, n_2, n_3, n_4 \in \mathbb{N}_{0} \} \cup \{ b^{n_2} c^{n_3} d^{n_4} (H E_2 F_2 - 2 E E_2 F_1) \, | \, n_2, n_3, n_4 \in \mathbb{N}_{0} \} \\
& \cup \{ b^{n_2} c^{n_3} d^{n_4} E_2 F_2 \, | \, n_2, n_3, n_4 \in \mathbb{N}_{0} \} \cup \{ b^{n_2} c^{n_3} d^{n_4} H \, | \, n_2, n_3, n_4 \in \mathbb{N}_{0} \}.
\end{align*}
Let 
\begin{align*}
& \sum_{i \in \mathcal{I}} \lambda_i b^{n_{2, i}} c^{n_{3, i}} d^{n_{4, i}} +  \sum_{j \in \mathcal{J}} \lambda_j b^{n_{2, j}} c^{n_{3, j}} d^{n_{4, j}} H \\
& + \sum_{k \in \mathcal{K}} \lambda_k b^{n_{2, k}} c^{n_{3, k}} d^{n_{4, k}} E_2 F_2 +  \sum_{l \in \mathcal{L}} \lambda_l b^{n_{2, l}} c^{n_{3, l}} d^{n_{4, l}} (HE_2 F_2 - 2E E_2 F_1) = 0.
\end{align*}
We first consider the summands without $H$ and $E_2$ and get
$$
\sum_{i \in \mathcal{I}} \lambda_i (4EF)^{n_{2, i}}(E_1 F_1)^{n_{3, i}}(2F E_1 F_2)^{n_{4, i}} = 0.
$$
Then it follows $\lambda_i = 0$ for all $i \in \mathcal{I}$, similarly as in case $a)$. Now we consider summands without $H$ and $F_1$ and get
$$
\sum_{k \in \mathcal{K}} \lambda_k (4EF)^{n_{2, k}}(E_2 F_2)^{n_{3, k}}(2F E_1 F_2)^{n_{4, k}} E_2 F_2 = 0.
$$
It follows $\lambda_k = 0$ for all $k \in \mathcal{K}$. Then we consider summands without $H$ and $F_2$ and similarly as before get $\lambda_l = 0$ for all $l \in \mathcal{L}$. Finally, we have
$$
\sum_{j \in \mathcal{J}} \lambda_j b^{n_{2, j}}c^{n_{3, j}}d^{n_{4, j}} H = 0.
$$
We conclude $\lambda_j = 0$ for all $j \in \mathcal{J}$.

\item We consider summands of the form $\cdot \otimes E_1 \wedge E_2 \wedge F_1$. Let
$$
\sum_{i \in \mathcal{I}} \lambda_i b^{n_{2, i}} c^{n_{3, i}} d^{n_{4, i}} F_2 + \sum_{j \in \mathcal{J}} \lambda_j b^{n_{2, j}} c^{n_{3, j}} d^{n_{4, j}}(2 E F_1 - H F_2) = 0.
$$
By considering summands without $H$ and $F_2$ we get $\lambda_j = 0$ for all $j \in \mathcal{J}$ and then, from the independence of the set $\{ b^{n_2} c^{n_3} d^{n_4} \, | \, n_2, n_3, n_4 \in \mathbb{N}_{0} \}$ it follows $\lambda_i = 0$ for all $i \in \mathcal{I}$.

\item This time we consider summands of the form $\cdot \otimes E_1 \wedge F_1 \wedge F_2$ and then those without $H$ and $E_2$. The proof is similar as in the previous case.
\end{enumerate}

This finishes the proof of linear independence of the set $S\cdot T$. To prove that $S\cdot T$ is also a spanning set, we use Proposition \ref{propdeginv}. We consider the degrees of the elements of the set $\{d^{n_4}\}\cdot T$, i.e., of
\begin{align*}
& \{ d^{n_4}, d^{n_4}e, d^{n_4}f, d^{n_4}g, d^{n_4}h, d^{n_4}i, d^{n_4}j, \\
& d^{n_4} ef, d^{n_4}eg, d^{n_4}fg, d^{n_4}g^2, d^{n_4}ei, d^{n_4} ej, d^{n_4}fh, d^{n_4}fi, d^{n_4}fj \}.
\end{align*}
The degrees of these elements are respectively 
\begin{align*}
& 3 n_4, 3 n_4 + 2, 3 n_4 + 2, 3 n_4 + 2, 3 n_4 + 3, 3 n_4 + 3, 3 n_4 + 3, \\
& 3 n_4 + 4, 3 n_4 + 4, 3 n_4 + 4, 3 n_4 + 4, 3 n_4 + 5, 3 n_4 + 5, 3 n_4 + 5, 3 n_4 + 5, 3 n_4 + 5.
\end{align*}
Varying $n_4$ and considering the number of invariants in each degree of the set $\{d^{n_4}\,\big|\, n_4\in\bbZ_+\}\cdot T$, we get the following table:

\bigskip

\begin{center}
\begin{tabular}{|c|c|}
  \hline
  degree & number of invariants \\ \hline
  $0$ & $1$ \\
  $1$ & $0$ \\
  $2$ & $3$ \\
  $3k, \quad k \geq 1$ & $4$ \\
  $3k+1, \quad k \geq 1$ & $4$ \\
  $3k+2, \quad k \geq 1$ & $8$ \\
  \hline
\end{tabular}
\end{center}

\bigskip

In view of Proposition \ref{propdeginv}, this finishes the proof.
\end{proof}  

The above proposition shows in particular that the algebra $(S(\frg)\otimes \twedge(\frp))^K$ is generated by elements $a,\dots,j$. It also proves the following facts:

\begin{cor}
\label{corSg}
The algebra $S(\frg)^K$ is a polynomial algebra generated by the elements $a,b,c,d$. The algebra $(S(\frg)\otimes \twedge(\frp))^K$ is a free module over $S(\frg)^K$ of rank 16, and the elements of the set $T$ form a basis for this free module.
\end{cor}

\section{The set of generators for $(U(\g) \otimes C(\p))^{K}$}

In this section we will show that the algebra of $K$--invariants in $U(\g) \otimes C(\p)$ is generated by five explicit elements. Two of these elements generate $Z(\frk)$ and the third generates the algebra $C(\frp)^K$. The fourth element is the Dirac operator, and the fifth can be considered as a
$\frk$-analogue of the Dirac operator.

Recall that the symmetrization map $\sigma : S(\g) \longrightarrow U(\g)$ given by
$$
\sigma(x_1 x_2 \cdots x_n)= \frac{1}{n!} \sum_{\alpha \in S_n} x_{\alpha(1)} x_{\alpha(2)} \cdots x_{\alpha(n)}, \quad n \in \mathbb{N}, x_1, \cdots, x_n \in \g
$$
is an isomorphism of $K$--modules. The Chevalley map $\tau: \twedge(\p) \longrightarrow C(\p)$ given by the composition of the map
$$
v_1 \wedge \cdots \wedge v_n \mapsto \frac{1}{n!} \sum_{\alpha \in S_n} \text{sgn}(\alpha) v_{\alpha(1)} \otimes \cdots \otimes v_{\alpha(n)}
$$
from $\twedge(\p)$ into the tensor algebra $T(\p)$ and the canonical projection from $T(\p)$ to $C(\p)$ is also an isomorphism of $K$--modules.

Since for $z_1, \cdots, z_n \in \g$ and $\alpha \in S_n$ we have $z_1 \cdots z_n - z_{\alpha(1)} \cdots z_{\alpha(n)} \in U_{n-1}(\g)$, it follows
\begin{equation}\label{universal}
\sigma(z_1 \cdots z_n) = z_1 \cdots z_n \quad\text{modulo } U_{n-1}(\g).
\end{equation}
Similarly, since for $y_1, \cdots, y_k \in \p$ and $\alpha \in S_k$ we have $y_1 \cdots y_k - \text{sgn}(\alpha) y_{\alpha(1)} \cdots y_{\alpha(k)} \in C_{k-1}(\p)$, it follows
\begin{equation}\label{clifford}
\tau(y_1 \wedge \cdots \wedge y_k) = y_1 \cdots y_k \quad\text{modulo }  C_{k-1}(\p). 
\end{equation}
Consider the following elements of $U(\g) \otimes C(\p)$:  
\begin{align*}
& \tilde{a} = (H_1 + H_2) \otimes 1, \\
& \tilde{b} = (H^2 + 2(EF + FE)) \otimes 1, \\
& \tilde{c} = (E_1 F_1 + E_2 F_2) \otimes 1, \\
& \tilde{d} = (2 E E_2 F_1 + H E_1 F_1 - H E_2 F_2 + 2 F E_1 F_2) \otimes 1, \\
& \tilde{e} = F_1 \otimes E_1 + F_2 \otimes E_2, \\
& \tilde{f} = E_1 \otimes F_1 + E_2 \otimes F_2, \\
& \tilde{g} = 1 \otimes (E_1  F_1 + E_2  F_2), \\
& \tilde{h} = (2 E E_2 + H E_1) \otimes F_1 + (- H E_2 + 2 F E_1) \otimes F_2, \\
& \tilde{i} = 2 E \otimes E_2  F_1 + H \otimes E_1  F_1 - H \otimes E_2 F_2 + 2 F \otimes E_1  F_2, \\
& \tilde{j} = (H F_1 + 2 F F_2) \otimes E_1 + (2 E F_1 - H F_2) \otimes E_2.
\end{align*}
One can check that
\begin{align}\label{sigmatau}
& (\sigma \otimes \tau) (a) = \tilde{a}, \\
& (\sigma \otimes \tau) (b) = \tilde{b}, \notag \\
& (\sigma \otimes \tau) (c) = \tilde{c} - \frac{3}{2} \tilde{a}, \notag \\
& (\sigma \otimes \tau) (d) = \tilde{d} - \frac{1}{2}\tilde{b} - \frac{3}{2} \tilde{a}, \notag \\
& (\sigma \otimes \tau) (e) = \tilde{e}, \notag \\
& (\sigma \otimes \tau) (f) = \tilde{f}, \notag \\
& (\sigma \otimes \tau) (g) = \tilde{g} + 2 \otimes 1, \notag \\
& (\sigma \otimes \tau) (h) = \tilde{h} - \frac{3}{2} \tilde{f}, \notag \\
& (\sigma \otimes \tau) (i) = \tilde{i}, \notag \\
& (\sigma \otimes \tau) (j) = \tilde{j} + \frac{3}{2} \tilde{e} \notag 
\end{align}

Using this, \eqref{universal}, \eqref{clifford} and Proposition \ref{basis} one shows by induction that the following lemma holds:

\begin{lemma}
\label{ugcpk10gen}
The algebra $(U(\g) \otimes C(\p))^{K}$ is generated by elements $\tilde{a}$, $\tilde{b}$, $\tilde{c}$, $\tilde{d}$, $\tilde{e}$, $\tilde{f}$, $\tilde{g}$, $\tilde{h}$, $\tilde{i}$ and $\tilde{j}$. 
\end{lemma}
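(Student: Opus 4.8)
The plan is to run an induction on filtration degree, using the associated graded algebra to reduce everything to Proposition \ref{basis}. Equip $U(\frg)\otimes C(\frp)$ with the tensor-product filtration whose piece of degree $N$ is $\sum_{m+\ell\le N}U_m(\frg)\otimes C_\ell(\frp)$. By the Poincar\'e--Birkhoff--Witt theorem together with the analogous statement for the Clifford algebra, the associated graded algebra is precisely $S(\frg)\otimes\twedge(\frp)$, and the symbol map $\gr$ is multiplicative in the sense that the symbol of a product equals the product of the symbols whenever the latter is nonzero. The $K$-action $\Ad\otimes\Ad$ preserves each filtration piece, so the filtration is $K$-stable, the graded pieces are $K$-modules, and the symbol of a $K$-invariant of degree $N$ is again a $K$-invariant of degree $N$, i.e. an element of $(S(\frg)\otimes\twedge(\frp))^K$.

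First I would record that each of the ten elements $\tilde a,\dots,\tilde j$ has symbol equal to the corresponding element $a,\dots,j$ of $S(\frg)\otimes\twedge(\frp)$. This is exactly what \eqref{universal}, \eqref{clifford} and the explicit relations \eqref{sigmatau} provide: each line of \eqref{sigmatau} exhibits $(\sigma\otimes\tau)$ applied to a generator as the corresponding tilde element plus terms of strictly lower filtration degree. Since $\sigma$ and $\tau$ preserve their respective filtrations with $\gr(\sigma\otimes\tau)=\id$, it follows that $\gr(\tilde x)=x$ for every $x\in\{a,\dots,j\}$.

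Now let $A\subseteq(U(\frg)\otimes C(\frp))^K$ be the subalgebra generated by $\tilde a,\dots,\tilde j$, and prove by induction on $N$ that every $K$-invariant of filtration degree at most $N$ lies in $A$. The base case $N=0$ is trivial, since such elements are scalars. For the inductive step, take a $K$-invariant $w$ of filtration degree $N$ and form its symbol $\gr_N(w)\in(S(\frg)\otimes\twedge(\frp))^K$. By Proposition \ref{basis} this symbol is a linear combination $\sum_\mu\lambda_\mu\,m_\mu$ of degree-$N$ basis monomials $m_\mu=a^{n_1}b^{n_2}c^{n_3}d^{n_4}t$ with $t\in T$. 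Replacing each factor by its tilde counterpart (in a fixed order), set $\tilde m_\mu=\tilde a^{n_1}\tilde b^{n_2}\tilde c^{n_3}\tilde d^{n_4}\,\tilde t\in A$ and $P=\sum_\mu\lambda_\mu\tilde m_\mu\in A$. Since the symbols $m_\mu$ are nonzero basis vectors, no degree collapse occurs and $\gr_N(\tilde m_\mu)=m_\mu$, whence $\gr_N(P)=\gr_N(w)$. Therefore $w-P$ is a $K$-invariant of filtration degree at most $N-1$, so $w-P\in A$ by the inductive hypothesis, and thus $w\in A$. This completes the induction and the proof.

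The point demanding the most care is the multiplicativity of the symbol map on the products $\tilde m_\mu$: one must be certain that multiplying the tilde generators inside $U(\frg)\otimes C(\frp)$ does not lower the leading degree. This is guaranteed precisely because the intended leading terms $m_\mu$ are the nonzero basis vectors furnished by Proposition \ref{basis}; the noncommutativity of $U(\frg)$ and of $C(\frp)$ contributes only strictly lower-order corrections, which are absorbed by the induction. A minor bookkeeping issue is the choice of ordering of the factors in each $\tilde m_\mu$, but any fixed choice works, since reordering again only alters lower-order terms.
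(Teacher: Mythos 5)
Your proof is correct and is essentially the paper's own argument: the paper simply states that the lemma follows "by induction" from \eqref{universal}, \eqref{clifford}, \eqref{sigmatau} and Proposition \ref{basis}, and your filtration/symbol induction is precisely that argument, carried out in full detail. The care you take with multiplicativity of the symbol map and with the homogeneity of the basis monomials correctly fills in the steps the paper leaves implicit.
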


Let 
\[
D = E_1 \otimes F_1 + E_2 \otimes F_2 + F_1 \otimes E_1 + F_2 \otimes E_2
\] 
be the Dirac operator. 
Using $D$, we can reduce the set of generators for $(U(\g) \otimes C(\p))^{K}$ given by Lemma \ref{ugcpk10gen}, since we have
\begin{align*}
\tilde{e} & = \frac{1}{2}(D + \frac{1}{2} D \tilde{g} - \frac{1}{2} \tilde{g} D), \\
\tilde{f} & = \frac{1}{2}(D - \frac{1}{2} D \tilde{g} + \frac{1}{2} \tilde{g} D), \\
\tilde{c} & = - \frac{1}{4} (\tilde{i} + 2(\tilde{e} \tilde{f} + \tilde{f} \tilde{e}) + 3\tilde{a} \tilde{g}), \\
\tilde{h} & = 2(\tilde{f} \tilde{c} - \tilde{c} \tilde{f}) - 3\tilde{a} \tilde{f} + 6 \tilde{f}, \\
\tilde{j} & = 2(\tilde{c} \tilde{e} - \tilde{e} \tilde{c}) - 3\tilde{a} \tilde{e}, \\
\tilde{d} & = \frac{1}{2}(- \tilde{h} \tilde{e} - \tilde{e} \tilde{h} + 2 \tilde{c} \tilde{g} - \tilde{e} \tilde{f} - 6 \tilde{a} \tilde{g} - \frac{1}{2} \tilde{b} \tilde{g} - \tilde{i} - \frac{3}{2} \tilde{a} \tilde{i}). \\
\end{align*}
From this we conclude that the algebra $(U(\g) \otimes C(\p))^{K}$ is generated by elements $\tilde{a}, \tilde{b}, D, \tilde{g}, \tilde{i}$. 

The element $\tilde{a}$ is in the center of $\ka$. The element $\tilde{b}$ is up to scalar equal to the Casimir element of $\frk_s=\mathfrak{sl}(2,\mathbb{C})$. Thus $\tilde{a}$ and $\tilde{b}$ generate $Z(\frk)$. The element $\tilde{g}$ generates the three-dimensional algebra $C(\frp)^K$.

Furthermore, note that the basis $\left(E, F, H_1 - H_2, H_1 + H_2 \right)$ of $\ka$ is dual for the basis 
\[
\left(F, E, \frac{1}{2}(H_1 - H_2), \frac{3}{2}(H_1 + H_2) \right)
\]
with respect to the trace form. 
Let $\alpha : \frk \longrightarrow C(\p)$ be the action map $\ka \longrightarrow \mathfrak{so}(\p)$ composed with the inclusion 
$$
\mathfrak{so}(\p) \cong   \twedge^{2}(\p) \hookrightarrow C(\p)
$$
(see \cite{HP2}). Then the element
\begin{multline*}
E \otimes \alpha(F) + \frac{1}{2}(H_1 - H_2) \otimes \alpha(H_1 - H_2) +\\ 
\qquad\qquad\frac{3}{2}(H_1 + H_2) \otimes \alpha(H_1 + H_2) + F \otimes \alpha(E) = \\
 - \frac{1}{4}(2 E \otimes E_2 F_1 + (H_1 - H_2) \otimes (E_1 F_1 - E_2 F_2) + \qquad\qquad\qquad\qquad\qquad\qquad\\ 
 3(H_1 + H_2) \otimes (E_1 F_1 + E_2 F_2) + 2 F \otimes E_1 F_2) - \frac{3}{2}(H_1 + H_2) \otimes 1 =\\
  -\frac{1}{4}(\tilde{i} + 3\tilde{a} \tilde{g}) - \frac{3}{2} \tilde{a}
\end{multline*}
can be thought of as a $\ka$-version of the Dirac operator, and we denote it by $D^\frk$. 
It is clear that we can replace $\tilde i$ by this element and still get a set of generators.

We have proved:

\begin{theorem}
The algebra $(U(\g) \otimes C(\p))^{K}$ is generated by the following five elements:
\begin{itemize}
\item $\tilde a$ and $\tilde b$, generating $Z(\frk)$;
\item $\tilde g$, generating $C(\frp)^K$;
\item The Dirac operator $D$ and its $\frk$-version $D^\frk$.
\end{itemize}
\end{theorem}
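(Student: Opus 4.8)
The plan is to take Lemma~\ref{ugcpk10gen} as the starting point: it already furnishes the ten generators $\tilde a,\tilde b,\tilde c,\tilde d,\tilde e,\tilde f,\tilde g,\tilde h,\tilde i,\tilde j$ of $(U(\g)\otimes C(\p))^K$, so the entire content of the theorem lies in eliminating five of them in favour of the two Dirac operators. The key conceptual point is that this elimination is \emph{genuinely} noncommutative. In the commutative model $(S(\g)\otimes\twedge(\p))^K$ one needs, by Corollary~\ref{corSg}, all four polynomial generators $a,b,c,d$ together with the module basis $T$, and no reduction to five generators is possible there because every commutator vanishes. It is precisely the commutators available in $U(\g)\otimes C(\p)$ that will reproduce the generators we wish to discard.

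First I would record that $D=\tilde e+\tilde f$ by definition, and then exploit $\tilde g\in C(\p)^K$ to separate the two summands. The inner derivation $[\,\cdot\,,\tilde g]$ sends $D$ to a nonzero multiple of the difference $\tilde e-\tilde f$ (the Clifford commutators of $E_1F_1+E_2F_2$ with the $C(\p)$-components of $\tilde e$ and $\tilde f$ carry opposite signs), so that $D$ and $[D,\tilde g]$ span the same plane as $\{\tilde e,\tilde f\}$; solving the resulting $2\times2$ linear system recovers $\tilde e$ and $\tilde f$ from $D$ and $\tilde g$ alone. This is the crucial first step, and it is where noncommutativity is used most essentially.

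Next I would recover the remaining elements in a chain, at each stage using only quantities already expressed through $\tilde a,\tilde b,\tilde g,D,\tilde i$. The anticommutator $\tilde e\tilde f+\tilde f\tilde e$, together with $\tilde i,\tilde a,\tilde g$, yields $\tilde c$; the commutators $[\tilde f,\tilde c]$ and $[\tilde c,\tilde e]$, corrected by multiples of $\tilde a\tilde f,\tilde f$ and $\tilde a\tilde e$, yield $\tilde h$ and $\tilde j$; and finally a suitable combination of $\tilde h\tilde e+\tilde e\tilde h,\ \tilde c\tilde g,\ \tilde e\tilde f,\ \tilde b\tilde g$ and $\tilde a\tilde i$ yields $\tilde d$. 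Since $\tilde a,\tilde b,\tilde g,\tilde i$ are in the set to begin with, this shows that $\{\tilde a,\tilde b,\tilde g,D,\tilde i\}$ generates the algebra. To reach the stated five generators I would then invert the defining identity $D^\frk=-\tfrac14(\tilde i+3\tilde a\tilde g)-\tfrac32\tilde a$, solving $\tilde i=-4D^\frk-3\tilde a\tilde g-6\tilde a$, so that $\tilde i$ may be swapped for $D^\frk$.

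The main obstacle is the verification of these explicit identities, which is a bookkeeping computation in the noncommutative algebra $U(\g)\otimes C(\p)$: one must reorder products in $U(\g)$ using the commutation relations \eqref{tcom}, accounting for the PBW lower-order terms, while simultaneously applying the Clifford relations $y_1y_2+y_2y_1=2B(y_1,y_2)$ in $C(\p)$. The shifts by multiples of $\tilde a,\tilde b$ and by the scalar $2\otimes1$ that already appear in \eqref{sigmatau} are exactly these lower-order corrections, and pinning down their coefficients is the only delicate part; the structural reason the reduction must succeed at all is guaranteed in advance by the degree count of Proposition~\ref{propdeginv} transported through the symmetrization isomorphism $\sigma\otimes\tau$.
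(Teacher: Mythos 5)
Your proposal is correct and takes essentially the same route as the paper: starting from Lemma~\ref{ugcpk10gen}, you recover $\tilde e$ and $\tilde f$ from $D$ and the commutator $[D,\tilde g]$, then reconstruct $\tilde c$, $\tilde h$, $\tilde j$, $\tilde d$ by the same chain of commutator/anticommutator identities, and finally trade $\tilde i$ for $D^\frk$ via $\tilde i=-4D^\frk-3\tilde a\tilde g-6\tilde a$. This is precisely the paper's elimination argument, with the paper merely writing out the explicit coefficients that you defer to a bookkeeping verification.
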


As a consequence, we get the following corollary about the quotient of the algebra $(U(\g) \otimes C(\p))^{K}$ by the ideal generated by $D$. This quotient algebra is important because it acts on Dirac cohomology of any $(\frg,K)$-module.

\begin{cor}
Let $\mathcal{I}$ be the ideal in the algebra $(U(\g) \otimes C(\p))^{K}$ generated by the Dirac operator $D$. The quotient algebra $(U(\g) \otimes C(\p))^{K}/\mathcal{I}$ is generated by classes of elements $\tilde{a}$, $\tilde{b}$, $\tilde{i}$ and $\tilde{g}$ and it is abelian. 
\end{cor}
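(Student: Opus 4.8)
The plan is to deduce both assertions almost directly from the Theorem, with only one genuine computation needed. \emph{Generation} is immediate: since $D^\frk=-\tfrac14(\tilde i+3\tilde a\tilde g)-\tfrac32\tilde a$, we may solve for $\tilde i=-4D^\frk-3\tilde a\tilde g-6\tilde a$ and replace the generating set $\{\tilde a,\tilde b,\tilde g,D,D^\frk\}$ of the Theorem by $\{\tilde a,\tilde b,\tilde g,D,\tilde i\}$. Passing to the quotient by $\mathcal I$ sends the class of $D$ to $0$, so $(U(\g)\otimes C(\p))^K/\mathcal I$ is generated by the classes of $\tilde a,\tilde b,\tilde g,\tilde i$, as claimed.

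For \emph{commutativity}, I would first observe that it suffices to show the four elements $\tilde a,\tilde b,\tilde g,\tilde i$ pairwise commute already in $(U(\g)\otimes C(\p))^K$: their images then commute, and an algebra generated by pairwise commuting elements is abelian. Four of the six brackets vanish for structural reasons and require no computation. The elements $\tilde a,\tilde b\in U(\frk)\otimes 1$ commute because $a=H_1+H_2$ is central in $\frk$ and $\tilde b$ is a scalar multiple of $\Cas_{\frk_s}$. The element $\tilde g\in 1\otimes C(\p)$ commutes with everything in $U(\g)\otimes 1$, so $[\tilde a,\tilde g]=[\tilde b,\tilde g]=0$. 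Finally, the $U(\g)$-components of $\tilde i$ are $E,H,F\in\frk_s$; these are annihilated by $\ad a$ (centrality of $a$ in $\frk$) and commute with $\Cas_{\frk_s}$, giving $[\tilde a,\tilde i]=[\tilde b,\tilde i]=0$.

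The only bracket needing work is $[\tilde g,\tilde i]$, which I expect to be the main (and essentially the only) obstacle. Since the $U(\g)$-factors of $\tilde i$ commute with $1$, this bracket equals a sum of terms $X\otimes[g_C,\,\cdot\,]$ with $g_C=E_1F_1+E_2F_2\in C(\p)$, so everything reduces to Clifford brackets in $C(\p)$. The cleanest route is to identify $g_C$ with $\alpha(a)$ up to a scalar and an additive constant: from $(\sigma\otimes\tau)(g)=\tilde g+2\otimes 1$ together with the defining anticommutation relations of $C(\p)$ one gets $g_C=-2\alpha(a)-2$, while rewriting $\tilde i$ through $D^\frk$ shows that all Clifford components of $\tilde i$ lie in $\alpha(\frk_s)$. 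Because $\alpha\colon\frk\to C(\p)$ is a Lie algebra homomorphism (for the commutator bracket on quadratic elements) and $a$ is central in $\frk$, we obtain $[g_C,\alpha(X)]=-2\alpha([a,X])=0$ for all $X\in\frk_s$, hence $[\tilde g,\tilde i]=0$. Alternatively one may simply verify that the three Clifford brackets $[g_C,E_2F_1]$, $[g_C,E_1F_1-E_2F_2]$, $[g_C,E_1F_2]$ vanish, using $E_iF_i+F_iE_i=-2$ and that all remaining anticommutators among $E_1,E_2,F_1,F_2$ are zero. Either way the four generators commute, so the quotient is abelian, completing the proof.
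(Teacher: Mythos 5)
Your proposal is correct, and it takes a route that differs from the paper's in an instructive way. The paper's proof does not compute any brackets at all: it simply asserts that the subalgebra $A$ generated by $\tilde a,\tilde b,\tilde i,\tilde g$ is abelian and contained in $U(\frk)\otimes C(\frp)$, and then devotes its actual work to a Poincar\'e--Birkhoff--Witt argument showing $(U(\frk)\otimes C(\frp))\cap\mathcal I=\{0\}$; combined with the generation statement (obtained, as in your first step, by trading $D^{\frk}$ for $\tilde i$ and killing $D$), this shows the projection is injective on $A$, so the quotient is a faithful isomorphic copy of $A$ --- in particular nonzero, so the corollary is not vacuous. Your proof goes the other way: you skip the PBW step entirely (it is indeed not needed for the statement as worded) and instead supply exactly the computation the paper leaves to the reader, namely that the four elements pairwise commute in $(U(\g)\otimes C(\p))^K$. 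Your treatment of the only substantive bracket $[\tilde g,\tilde i]$ is clean and checks out against the paper's own formulas: matching coefficients in the displayed expression for $D^{\frk}$ gives $\alpha(E)=-\tfrac12 E_1F_2$, $\alpha(H)=-\tfrac12(E_1F_1-E_2F_2)$, $\alpha(F)=-\tfrac12 E_2F_1$ and $g_C=E_1F_1+E_2F_2=-2\alpha(a)-2$, so $\tilde i=-4\bigl(E\otimes\alpha(F)+\tfrac12 H\otimes\alpha(H)+F\otimes\alpha(E)\bigr)$ has all Clifford components in $\alpha(\frk_s)$, and $[g_C,\alpha(X)]=-2\alpha([a,X])=0$ since $\alpha$ is a Lie algebra homomorphism and $a$ is central in $\frk$; your sign convention $E_iF_i+F_iE_i=-2$ is the one forced by the paper's identity $(\sigma\otimes\tau)(g)=\tilde g+2\otimes 1$. (A trivial slip: five, not four, of the six brackets are structural.) Net comparison: your argument is self-contained where the paper's is not, and it is the minimal proof of the stated claim; the paper's PBW step buys the stronger structural conclusion that $(U(\g)\otimes C(\p))^K/\mathcal I\cong A$, which your proof does not address.
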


\begin{proof}  
The algebra generated by elements $\tilde{a}$, $\tilde{b}$, $\tilde{i}$ and $\tilde{g}$ is abelian and it is a subset of the algebra $U(\ka) \otimes C(\p)$. Since $D$ has an element of $\frp$ in the first factor of each summand, while elements of $U(\ka) \otimes C(\p)$ have no elements of $\frp$ in the first factors of any of their summands, we can use the Poincar\'{e}-Birkhoff-Witt theorem to see
$$
(U(\ka) \otimes C(\p)) \cap \mathcal{I} = \{ 0 \}.
$$
The claim follows. 
\end{proof}  

By results of \cite{PR}, Section 4, the commutativity of the algebra $(U(\g) \otimes C(\p))^{K}/\mathcal{I}$ from the above corollary leads to the fact that for any irreducible $(\frg,K)$-module $X$, its Dirac cohomology $H_D(X)$ has $\Kt$-multiplicities equal to 1. By results of \cite{BP2}, this is known to be false for general $SU(p,q)$, so $(U(\g) \otimes C(\p))^{K}/\mathcal{I}$ will not be abelian in general.

The following result is a special case of a more general result for the Lie algebra $\g = \mathfrak{su}(n,1)$ \cite{J}.

\begin{cor}
\label{corugk}
The subalgebra $U(\g)^{K}$ of $U(\frg)$ is equal to $Z(\g) Z(\ka)$.
\end{cor}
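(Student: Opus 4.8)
The plan is to compare the filtered algebra $U(\g)^K$ with its subalgebra $Z(\g)Z(\ka)$ by passing to associated graded algebras, using the explicit description $S(\g)^K=\bbC[a,b,c,d]$ from Corollary \ref{corSg}. The easy inclusion $Z(\g)Z(\ka)\subseteq U(\g)^K$ is immediate: $Z(\g)$ is central and hence $K$-invariant, while $Z(\ka)\subseteq U(\ka)$ is $\ka$-invariant and hence $K$-invariant (its generators $\tilde a,\tilde b$ are manifestly $K$-invariant). So the entire content is the reverse inclusion.

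First I would compute $\gr U(\g)^K$. Since the symmetrization $\sigma\colon S(\g)\to U(\g)$ is $K$-equivariant and satisfies $\sigma(x)=x$ modulo lower order by (\ref{universal}), for any $w\in S^n(\g)^K$ the element $\sigma(w)$ lies in $U(\g)^K\cap U_n(\g)$ and has symbol $w$. This identifies $\gr U(\g)^K$ with $S(\g)^K=\bbC[a,b,c,d]$, a polynomial algebra on generators of degrees $1,2,2,3$. Next I would identify $\gr\bigl(Z(\g)Z(\ka)\bigr)$. By the classical theorem on the center, $\gr Z(\g)=S(\g)^\g$, which for $\g=\frsl(3,\bbC)$ is the polynomial algebra $\bbC[p_2,p_3]$ on the symbols of the quadratic and cubic Casimirs, with $\deg p_2=2$ and $\deg p_3=3$; one may take $p_2=\tr(Y^2)$ and $p_3=\tr(Y^3)$ under the identification of $S(\g)$ with polynomial functions on $\g$ via the trace form. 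By (\ref{Kks})--(\ref{Kk}) we have $\gr Z(\ka)=S(\ka)^\ka=\bbC[a,b]$. Because $S(\g)$ is an integral domain, symbols multiply, so $\gr\bigl(Z(\g)Z(\ka)\bigr)\supseteq\bbC[a,b,p_2,p_3]$, while the inclusion of algebras gives $\gr\bigl(Z(\g)Z(\ka)\bigr)\subseteq S(\g)^K$.

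It then remains to prove $\bbC[a,b,p_2,p_3]=\bbC[a,b,c,d]$ inside $S(\g)^K$. Working modulo the ideal $(a,b)$, whose graded quotient is $\bbC[c,d]$, the images of $p_2$ and $p_3$ are forced by degree to be $\mu c$ and $\alpha d$ respectively; if $\mu,\alpha\neq 0$, these images together with $a,b$ generate, and a short graded induction on degree (a graded Nakayama argument) yields the equality. Granting this, we obtain $\gr\bigl(Z(\g)Z(\ka)\bigr)=S(\g)^K=\gr U(\g)^K$, and since the two filtered algebras have the same associated graded while one is contained in the other, they coincide, giving $U(\g)^K=Z(\g)Z(\ka)$.

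The main obstacle is precisely the nonvanishing of $\mu$ and $\alpha$, i.e. the assertion that the quadratic and cubic invariants genuinely involve the "new" generators $c$ and $d$ rather than being expressible through $a$ and $b$ alone. I would settle this by direct evaluation of the fundamental invariants at cleverly chosen elements of $\g$. For $\mu$, taking $Y$ with $y_{13}=y_{31}=1$ and all other entries $0$ gives $a(Y)=b(Y)=0$ but $c(Y)=1$, while $\tr(Y^2)=2$, forcing $\mu=2$. For $\alpha$, taking $Y=p\,e_{12}+q\,e_{23}+r\,e_{31}$ gives $a(Y)=b(Y)=c(Y)=0$ and $d(Y)=2pqr$, while $\tr(Y^3)=3pqr$, forcing $\alpha=\tfrac32$. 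Both are nonzero, which completes the argument.
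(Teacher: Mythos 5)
Your proof is correct, but it takes a genuinely different route from the paper's. The paper argues entirely inside $U(\g)$: it uses the fact (obtained by symmetrization from Corollary \ref{corSg}) that $U(\g)^K$ is generated by $\tilde a,\tilde b,\tilde c,\tilde d$, and then exhibits explicit formulas for two generators of $Z(\g)$, namely the Casimir $\Omega=\frac{3}{2}\tilde a^2+\frac{1}{2}\tilde b+2\tilde c-3\tilde a$ and a cubic central element $\mathrm{cub}=3\tilde d+(\text{polynomial in }\tilde a,\tilde b,\tilde c)$, from which $\tilde c$ and $\tilde d$ can be solved in terms of $\tilde a,\tilde b,\Omega,\mathrm{cub}$; this gives $U(\g)^K\subseteq Z(\g)Z(\ka)$ directly, lower-order terms and all. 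You instead pass to the associated graded: you combine $\gr U(\g)^K=S(\g)^K=\bbC[a,b,c,d]$ with the classical facts $\gr Z(\g)=S(\g)^\g=\bbC[\tr(Y^2),\tr(Y^3)]$ and $\gr Z(\ka)=\bbC[a,b]$, reduce everything to the single nontrivial point that the quadratic and cubic fundamental invariants involve $c$ and $d$ with nonzero coefficients, check that point by evaluating at $e_{13}+e_{31}$ and $p\,e_{12}+q\,e_{23}+r\,e_{31}$, and finish with the standard filtration argument that nested filtered subspaces with equal associated graded coincide. Your evaluations are correct and in fact consistent with the paper: your $\mu=2$ is exactly the coefficient of $\tilde c$ in $\Omega$, and your $\alpha=\frac{3}{2}$ matches the coefficient $3$ of $\tilde d$ in $\mathrm{cub}$ up to the normalization of the cubic generator. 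What your approach buys: you never compute lower-order terms in the enveloping algebra (the most tedious part of the paper's computation), and the scheme generalizes to any pair for which $S(\g)^K$ and the fundamental invariants are known. What the paper's approach buys: explicit closed formulas in $U(\g)$ relating $\Omega$ and $\mathrm{cub}$ to $\tilde a,\tilde b,\tilde c,\tilde d$, which are independently useful in the Dirac-operator computations elsewhere in the paper, and no reliance on the classical identification of $\gr Z(\g)$ with the invariant polynomials on $\g$.
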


\begin{proof} 
From the theorem of Chevalley \cite{C} it follows that $Z(\g)$ is a polynomial algebra on two generators of degrees $2$ and $3$.
For these generators, we can take the Casimir element of $U(\g)$, 
\begin{multline*}
\Omega = \frac{1}{2} (H_1 - H_2)^2 + \frac{3}{2} (H_1 + H_2)^{2} + EF + FE + E_1 F_1 + E_2 F_2 + F_1 E_1 + F_2 E_2 =\\ 
\frac{3}{2} \tilde{a}^2 + \frac{\tilde{b}}{2} + 2\tilde{c} - 3\tilde{a},
\end{multline*}
and the element
\[
\text{cub} = - \frac{3}{2} \tilde{a}^{3} + \frac{3}{2} \tilde{a} \tilde{b} - 3 \tilde{a} \tilde{c} + \frac{9}{2} \tilde{a}^2 - 3 \tilde{a} + 3 \tilde{d} - \frac{3}{2} \tilde{b}.
\]
Since the algebra $U(\g)^{K}$ is generated by the elements $\tilde{a}, \tilde{b}, \tilde{c}, \tilde{d}$ and since $\tilde{a}$ and $\tilde{b}$ generate $Z(\ka)$, the claim follows. 
\end{proof}

Finally, we can obtain the following description of the algebra $(U(\g) \otimes C(\p))^K$. 
Let $\tilde{S}$ and $\tilde{T}$ be the following subsets of $(U(\g) \otimes C(\p))^K$:
\begin{align*}
\tilde{S} & = \{ \tilde{a}^{n_1} \tilde{b}^{n_2} \tilde{c}^{n_3} \tilde{d}^{n_4} \, | \, n_1, n_2, n_3, n_4 \in \mathbb{N}_{0} \}
\\
\tilde{T} & = \{ 1, \tilde{e}, \tilde{f}, \tilde{g}, \tilde{h}, \tilde{i}, \tilde{j}, \tilde{e}\tilde{f}, \tilde{e}\tilde{g}, \tilde{f}\tilde{g}, \tilde{g}^2, \tilde{e}\tilde{i}, \tilde{e}\tilde{j}, \tilde{f}\tilde{h}, \tilde{f}\tilde{i}, \tilde{f}\tilde{j} \} \notag
\end{align*}
\begin{cor}
\label{corugcpk}
The algebra $(U(\g) \otimes C(\p))^K$ is a free module over $U(\g)^{K}$ of rank $16$, and the elements of the set $\tilde{T}$ form a basis for this free module. 
\end{cor}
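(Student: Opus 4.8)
\emph{Proof proposal.} The plan is to transport everything through the filtered $K$--module isomorphism $\sigma\otimes\tau\colon S(\frg)\otimes\twedge(\frp)\to U(\frg)\otimes C(\frp)$ and to compare with the graded basis of Proposition \ref{basis}. I would equip $U(\frg)\otimes C(\frp)$ with the filtration $F_n=\sum_{p+q\le n}U_p(\frg)\otimes C_q(\frp)$; by \eqref{universal} and \eqref{clifford} its associated graded algebra is $\gr\bigl(U(\frg)\otimes C(\frp)\bigr)=S(\frg)\otimes\twedge(\frp)$, and $\sigma\otimes\tau$ is a filtration--preserving $K$--isomorphism inducing the identity on $\gr$. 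Since $K$ is reductive, each $F_n$ is a $K$--submodule with $K$--stable complements, so taking invariants commutes with $\gr$; hence $\gr\bigl((U(\frg)\otimes C(\frp))^K\bigr)=(S(\frg)\otimes\twedge(\frp))^K$ and likewise $\gr\bigl(U(\frg)^K\bigr)=S(\frg)^K$.

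First I would record that $\tilde S$ is a $\bbC$--basis of $U(\frg)^K$. By Corollary \ref{corSg} the symbols $a,b,c,d$ of $\tilde a,\tilde b,\tilde c,\tilde d$ freely generate the polynomial algebra $S(\frg)^K=\gr\bigl(U(\frg)^K\bigr)$; therefore $\tilde a,\tilde b,\tilde c,\tilde d$ are algebraically independent, $U(\frg)^K=\bbC[\tilde a,\tilde b,\tilde c,\tilde d]$, and $\tilde S$ is its monomial basis. This is consistent with Corollary \ref{corugk}, where $\Omega$ and $\mathrm{cub}$ are written invertibly in terms of $\tilde a,\tilde b,\tilde c,\tilde d$. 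Because $U(\frg)^K=Z(\frg)Z(\frk)$ is commutative, the notion of a free $U(\frg)^K$--module is unambiguous, and $(U(\frg)\otimes C(\frp))^K$ is a left $U(\frg)^K$--module via the inclusion $U(\frg)^K\hookrightarrow(U(\frg)\otimes C(\frp))^K$.

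The core step is to prove that $\tilde S\cdot\tilde T$ is a $\bbC$--basis of $(U(\frg)\otimes C(\frp))^K$, and this is where I expect the real work to lie. From the definitions of $\tilde a,\dots,\tilde j$ together with \eqref{sigmatau}, each $\tilde\xi$ has symbol equal to the corresponding generator $\xi\in\{a,\dots,j\}$, the correction terms in \eqref{sigmatau} all lying in strictly lower filtration degree. The delicate point is that the symbol of a product equals the product of symbols for these noncommutative elements, with no cancellation dropping the filtration degree: this holds precisely because the relevant products of symbols are the \emph{nonzero} basis elements of $(S(\frg)\otimes\twedge(\frp))^K$ furnished by Proposition \ref{basis}, so the symbol of each $\tilde a^{n_1}\tilde b^{n_2}\tilde c^{n_3}\tilde d^{n_4}\,\tilde t$ (with $\tilde t\in\tilde T$) is exactly the corresponding $a^{n_1}b^{n_2}c^{n_3}d^{n_4}\,t\in S\cdot T$. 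Thus the symbols of $\tilde S\cdot\tilde T$ are precisely $S\cdot T$, a homogeneous basis of $\gr\bigl((U(\frg)\otimes C(\frp))^K\bigr)$. A descending induction on filtration degree then yields linear independence, since the top--degree part of any vanishing combination is a relation among $S\cdot T$ in $\gr$. For spanning I would use a degree count: the filtered isomorphism $\sigma\otimes\tau$ matches $\dim\bigl((U(\frg)\otimes C(\frp))^K\cap F_n\bigr)$ with the dimension of the degree--$\le n$ part of $(S(\frg)\otimes\twedge(\frp))^K$ (the values tabulated via Proposition \ref{propdeginv}); as $\tilde S\cdot\tilde T$ is independent and contributes exactly the right number of new elements in each filtration degree, it must span.

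Finally I would deduce the module statement formally. Writing $\tilde S\cdot\tilde T=\{s\,\tilde t:s\in\tilde S,\ \tilde t\in\tilde T\}$, the fact that $\tilde S$ is a $\bbC$--basis of $U(\frg)^K$ and $\tilde S\cdot\tilde T$ a $\bbC$--basis of $(U(\frg)\otimes C(\frp))^K$ shows that every element of the latter is uniquely $\sum_{\tilde t\in\tilde T}p_{\tilde t}\,\tilde t$ with $p_{\tilde t}\in U(\frg)^K$: grouping the unique expansion in $\tilde S\cdot\tilde T$ gives existence and uniqueness of the $p_{\tilde t}$, and a relation $\sum_{\tilde t}p_{\tilde t}\tilde t=0$ expands into a vanishing $\bbC$--combination of $\tilde S\cdot\tilde T$, forcing every $p_{\tilde t}=0$. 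Hence $\tilde T$ is a free $U(\frg)^K$--module basis and the module has rank $16=\#\tilde T=\dim C(\frp)$.
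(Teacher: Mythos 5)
Your proposal is correct and takes essentially the same route as the paper: both reduce the corollary to showing that $\tilde{S}\cdot\tilde{T}$ is a $\bbC$--basis of $(U(\g)\otimes C(\p))^K$, and then transfer the basis $S\cdot T$ of Proposition \ref{basis} through $\sigma\otimes\tau$ by induction on filtration degree, using \eqref{universal}, \eqref{clifford} and \eqref{sigmatau}. The only difference is packaging: you phrase the induction in associated-graded/symbol language (with reductivity of $K$ ensuring that invariants commute with $\gr$), whereas the paper runs the same induction via the explicit $K$--stable splittings $U_n(\g)=\sigma(S^n(\g))\oplus U_{n-1}(\g)$ and $C_m(\p)=\tau(\twedge^m(\p))\oplus C_{m-1}(\p)$.
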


\begin{proof} 
It is enough to prove that the set $\tilde{S} \cdot \tilde{T}$ of products of elements of $\tilde{S}$ and $\tilde{T}$ in the algebra $U(\g) \otimes C(\p)$  is a basis for $(U(\g) \otimes C(\p))^{K}$. Using \eqref{universal}, \eqref{clifford}, \eqref{sigmatau} and Proposition \ref{basis} one shows by induction that the set $\tilde{S} \cdot \tilde{T}$ spans $(U(\g) \otimes C(\p))^K$. Since for each positive integer $n$ we have 
\[
U_{n}(\g) = \sigma(S^{n}(\g)) \oplus U_{n-1}(\g)\qquad\text{and}\qquad C_{n}(\p) = \tau(\twedge^{n}(\p)) \oplus C_{n-1}(\p),
\] 
the following identity holds for all positive integers $n$ and $m$:
\begin{align*}
U_{n}(\g) \otimes C_{m}(\p) & = (\sigma \otimes \tau) (S^{n}(\g) \otimes \twedge^{m}(\p)) \oplus (U_{n-1}(\g) \otimes \tau(\twedge^{m}(\p)) \\
& \oplus (\sigma(S^{n}(\g)) \otimes C_{m-1}(\p)) \oplus (U_{n-1}(\g) \otimes C_{m-1}(\p))).
\end{align*}
Linear independence now follows easily by induction on $n$ and $m$ from \eqref{sigmatau} and Proposition \ref{basis}.
\end{proof}

\vspace{.2in}

\end{document}